\allowdisplaybreaks \numberwithin{equation}{section}
\newtheorem{theorem}{Theorem}[section]
\newtheorem{proposition}[theorem]{Proposition}
\newtheorem{lemma}[theorem]{Lemma}
\newtheorem{corollary}[theorem]{Corollary}
\newtheorem{example}[theorem]{Example}
\newtheorem{definition}[theorem]{Definition}
\newtheorem{remark}[theorem]{Remark}
\def\neweq#1{\begin{equation}\label{#1}}
\def\endeq{\end{equation}}
 \font \ninemaius=cmcsc10 scaled 900
\font\nineit=cmti9  \font\svfilt=msbm7
\font\ninebf=cmbx9
\font\nineit=cmti9
\font\ninerm=cmr9
\def \div {\mathop {\rm div}\nolimits}
\def \e {\varepsilon}
\def \u {\overline{u}}
\def \ov {\overline}
\def\qed{{}\hfill{$\square$}\par\medskip}
\def \mint{-\hskip -1.06 em \int}
\def \re {\mathbb R}
\def \svre {\hbox{\svfilt R}}
\def \wh {\widehat}
\def \wt {\widetilde}
\def\Upsilon{\wt{\cal I}}
\def \T{\mathrm{Tr}\,}
\def \DM{\mathcal D\mathcal M_\infty}
\date{}
\begin{document}

\title{Shape derivatives for minima of integral functionals}

\author{Guy BOUCHITT\'E$^*$, \  Ilaria FRAGAL\`A$^\sharp$, \ Ilaria LUCARDESI$^\sharp$
\\
{\small $*$ Laboratoire IMATH, Universit\'e de Toulon et du Var,
+83957 La Garde Cedex (France)}
\\
{\small $\sharp$ Dipartimento di Matematica, Politecnico di Milano,
Piazza L. da Vinci, 20133 Milano (Italy)}}

\maketitle

\begin{abstract}
For $\Omega$ varying among
open bounded sets in $\re ^n$, we consider shape functionals $J (\Omega)$ defined as the infimum over a Sobolev space of an integral energy of the kind
$\int _\Omega[ f (\nabla u) + g (u) ]$, under Dirichlet or Neumann conditions on $\partial \Omega$.
Under fairly weak assumptions on the integrands $f$ and $g$, we prove that, when a given domain $\Omega$ is deformed into a one-parameter family of domains $\Omega _\e$ through an initial velocity field $V\in W ^ {1, \infty} (\re ^n, \re ^n)$, the corresponding shape derivative of $J$ at $\Omega$ in the direction of $V$ exists.  Under some further regularity assumptions, we show that the shape derivative can be represented as a boundary integral depending linearly on the normal component of $V$ on $\partial \Omega$. Our approach to obtain the shape derivative is new, and it is based on the joint use of Convex Analysis and Gamma-convergence techniques. It allows to deduce, as a companion result, optimality conditions in the form of conservation laws.
\end{abstract}

{\small {\it Keywords}: shape functionals, infimum problems, domain derivative, duality. }

{\small {\it MSC2010}: 49Q10, 49K10, 49M29, 49J45.}

\section{Introduction}
The theory of shape derivatives is a widely studied topic, with many applications in variational problems and optimal design.  Its origin can be traced back to the first half of the last century, with the pioneering work by Hadamard \cite{H}, followed by Schiffer and Garabedian \cite{S,GS}.
Afterwords, some important advances came in the seventies by C\'ea, Murat, and Simon \cite{C,MS,Si}. From the nineties forth, the many contributions given by different authors are witness of a renewed interest, partly motivated by the impulse given by the development of the field of numerical analysis in the research of optimal shapes.  We refer to  the recent monograph \cite{HP} by Henrot and Pierre as a reference text (see also the books \cite{DZ,SZ}), and, without any attempt of completeness, to the representative works \cite{Bu,DP,DZ1,G,GM,NP}.

In this paper we deal with the shape derivative of functionals which are obtained by minimizing a classical integral of the Calculus of Variations, under Dirichlet or Neumann conditions. Namely, we consider the functionals of domain defined, for
$\Omega$ varying among open bounded subsets of  $\re^n$, by \begin{eqnarray}
&J_D(\Omega):=\displaystyle{-\inf\left\{ \int_\Omega\big [ f(\nabla u) + g(u) \big ]\, dx \ :\ u\in W^{1,p}_0(\Omega)\right\}}
&\label{J_D}
\\ \noalign{\medskip}
&J_N(\Omega) := \displaystyle{-\inf\left\{ \int_\Omega\big [ f(\nabla u) + g(u) \big ]\, dx \ :\ u\in W^{1,p}(\Omega)\right\}\,.}
\label{J_N}
\end{eqnarray}
Here
 $f: \re ^n \to \re$ and $g: \re \to \re$ are continuous and convex integrands, which satisfy growth conditions of order $p$ and $q$ respectively, specified later on. We point out that we have put a minus sign in front of the infima in (\ref{J_D})-(\ref{J_N})
 just for a matter of convenience; 
 indeed by this way, in the typical case when $f(0) = g (0)= 0$, we deal with positive shape functionals.

In the sequel, the notation $J (\Omega)$ is adopted for brevity in all the statements which apply indistinctively in the Dirichlet and Neumann cases.

Given a vector field $V$ in
$W ^ {1, \infty} (\re ^n, \re ^n)$, we consider the one-parameter family of domains which are obtained as deformations of $\Omega$ with $V$ as initial velocity, that is we set
\begin{equation}\label{omegae}
\Omega_\e:=\Big \{ x + \e V ( x) \ :\ x \in \Omega \Big \}\,, \qquad \e > 0\ .
\end{equation}

By definition, the shape derivative of $J$ at $\Omega$ in direction $V$, if it exists, is given by the limit
\begin{equation}\label{deri1}
J'(\Omega,V):=\displaystyle{\lim_{\e\to 0^+ } \frac{J(\Omega_\e)-J(\Omega)}{\e} }\,.
\end{equation}

The approach we adopt in order to study the shape derivative (\ref{deri1}) is different from the one usually employed in the literature, and seems to have a twofold interest:
on one hand it allows to obtain the shape derivative for more general integrands $f$ and $g$;
on the other hand, it leads to establish conservation laws for solutions to problems (\ref{J_D})-(\ref{J_N}).

Before describing the results, let us briefly recall the habitual approach to the computation of $J' (\Omega, V)$, in order to enlighten the difference of perspective.  Classically, the object of study in theory of shape derivatives is the differentiability  at $\e
 = 0^+$ of functions of the form
\begin{equation}\label{integral_functional}
I(\e):=\int_{\Omega_\e} \phi(\e,x)\,dx\ ,
\end{equation}
being $\Omega_\e$ a one-parameter family of deformations of $\Omega$, as in (\ref{omegae}). As a special case of (\ref{integral_functional}), one can deal with shape derivatives for minima of integral functionals: namely, letting $u_\e$ be a solution to the infimum problem $J(\Omega_\e)$ and choosing
\begin{equation}\label{integrand}
\phi(\e,x):=- \big [ f(\nabla u_\e(x)) +  g(u_\e(x))]\ ,
\end{equation}
there holds $J(\Omega_\e)=I(\e)$.
The differentiability at $\e = 0 ^+$ of the map $I (\e)$, along with the formula for its right derivative, is proved in
 \cite{HP} assuming suitable regularity hypotheses on the integrand $\phi$.
Thus,
in order to deal with shape functionals like (\ref{J_D}) or (\ref{J_N}), one has to check that
the function $\phi(\e, x)$ defined by (\ref{integrand})  fulfills the afore mentioned regularity hypotheses.
This check has to be done case by case, according to the choice of $f$ and $g$, and requires in particular to compute, by means of
the Euler-Lagrange equation satisfied by $u_\e$,
 the derivative
\begin{equation}\label{u'} u' := \frac{d}{d \e} u _\e \big |_{\e = 0 ^+}\, .\end{equation}

Subsequently, further regularity assumptions
must be imposed in order to obtain structure theorems and representation results for shape derivatives, which lead to express them as boundary integrals over $\partial \Omega$.  We refer to \cite{HP} for a detailed presentation.

\medskip

Adopting a completely different point of view, in this paper we propose a new approach based on the combined use of Convex Analysis and Gamma-convergence. In particular, we heavily exploit the dual formulation of $J(\Omega)$,  which in the Dirichlet and Neumann cases reads respectively
\begin{eqnarray}
& \displaystyle{J_D^*(\Omega)\!=\! \inf\left\{ \! \int_\Omega \! [f^\ast(\sigma) + g^\ast(\div \sigma)]  dx \, :\, \sigma \in L^{p'}(\Omega;\re^n)\,, \ \div \sigma \in L ^ {q'} (\Omega) \right\}}\,& \label{J_D*}
\\ \noalign{\smallskip}
 &\displaystyle{J_N^*(\Omega)\!= \!\inf\left\{\!  \int_\Omega \![ f^\ast(\sigma) + g^\ast(\div \sigma)] dx \, :\, \sigma \in L^{p'}(\Omega;\re^n), \, \div \sigma \in L ^ {q'} (\Omega)\,, \, \sigma \cdot n = 0 \hbox{ on } \partial \Omega \!\right\}\ \ }& \label{J_N*}
\end{eqnarray}
where $f^\ast$ and $g^\ast$ denote the Fenchel conjugates of $f$ and $g$, $p'$ and $q'$ are the conjugate exponents of $p$ and $q$, while $\sigma \cdot n$ is the normal trace of $\sigma$ on $\partial \Omega$ intended in the sense of distributions (see Lemma \ref{lemma_opt_cond}).

Our strategy consists in 
giving lower and upper bounds for the quotient

$$
    q_\e (V):=\frac{ J(\Omega _\e) - J(\Omega) }{\e}
\,,
$$
by exploiting in particular the fact that $J(\Omega_\e)$ or $J(\Omega)$ can be rewritten in dual form according to (\ref{J_D*})-(\ref{J_N*}).
Such bounds read respectively
\begin{equation}\label{q-}
\liminf _{\e \to 0 ^+} q _\e (V) \geq \inf _{\sigma \in \mathcal S ^*} \sup _{u \in \mathcal S} \int_\Omega A(u, \sigma): DV\,dx \end{equation}
and
\begin{equation}\label{q+}
\limsup _{\e \to 0 ^+} q _\e(V) \leq \sup _{u \in \mathcal S} \inf _{\sigma \in \mathcal S ^*} \int_\Omega A(u, \sigma): DV\, dx\,
\end{equation}
where $\mathcal S$ and $\mathcal S ^*$ denote the set of solutions to $J (\Omega)$ and $J ^* (\Omega)$, $A(u, \sigma)$ is the  tensor defined on the product space  $\mathcal S \times \mathcal S ^*$ by
\begin{equation}\label{tensA}
A(u, \sigma) := \nabla  u \otimes \sigma  - [f(\nabla u)+g(  u)]\,I
\end{equation}
(being $I$ the identity matrix), and $A(u, \sigma):DV$ denotes the Euclidean scalar product of the two matrices.
Since the  inf-sup at the r.h.s. of (\ref{q-}) is larger than or equal to the sup-inf at the r.h.s. of (\ref{q+}),
we conclude that they agree, and that the limit as $\e \to 0 ^+$ of $q _\e (V)$, namely the shape derivative $J' (\Omega, V)$, exists.  Moreover,  denoting by $(u ^\star, \sigma ^\star) \in \mathcal S \times \mathcal S ^*$ an element where the value of the sup-inf or inf-sup is attained, there holds
$$J' (\Omega, V) = \int_\Omega A(u^\star, \sigma^\star): DV\, dx\, ,$$
see Theorem \ref{thm_firstder}.

In general, since the pair $(u ^ \star, \sigma ^ \star)$ in the above representation formula may depend on $V$, one cannot assert that $J' (\Omega, V)$ is a linear form in $V$ (for a more detailed discussion in this respect, see Remark \ref{remnonlin} and Example \ref{exBV}).

Nonetheless, as soon as the function $f$, the solutions $ u$ to $J (\Omega)$ and the boundary $\partial \Omega$
satisfy some fairly weak regularity conditions,
the shape derivative can be recast as a boundary integral depending linearly on $V$
(see eq. (\ref{J'bis}) in Theorem \ref{cor_firstder});
we stress that such conditions do not include the existence of a boundary trace for $\nabla u$,
as it is typical in the classical formulas for shape derivatives.
In turn, if $\nabla u$ turns out to be regular enough,
the shape derivative can be rewritten under the customary form of a boundary integral
depending only on the normal component of $V$ on $\partial \Omega$
(see eq. (\ref{thesis_traces}) in Theorem \ref{cor_firstder}).


\medskip
As a by-product of the above described bounds for $q _\e (V)$, we obtain a result which seems to have an autonomous interest, namely the validity of optimality conditions in the form of conservation laws for the variational problems under study. Actually, by making horizontal variations (somewhat in the same spirit of \cite{FG}), that is by exploiting the vanishing of $q _\e (V)$ for all $V \in \mathcal D (\Omega, \re ^n)$, we infer that suitable tensors of the type (\ref{tensA}) turn out to be divergence-free, see Corollary \ref{optiA}.
In particular, in case $f$ is Gateaux-differentiable except
at most at the origin, the outcome is  simply the following distributional equality

\begin{equation}\label{divnulla}
\div \Big ( \nabla u \otimes \nabla f ( \nabla  u)  - [f(\nabla  u)+g(  u)]\,I \Big ) = 0 \qquad \forall u \in {\mathcal S}\, .
\end{equation}
Notice that in the scalar case $n=1$, this relation reduces to the classical conservation law
$${u} '   f' ( {u} ')  - [f( {u} ')+g( u)] = c\, ,$$
which is obtained as a first integral of the Euler-Lagrange equations for smooth Lagrangians, see {\it e.g.} \cite[Corollaire 2.1.6]{Dac}.
Surprisingly the higher dimensional version (\ref{divnulla}) seems not to be widely known, and we could find it in the literature just in the regular case (see \cite{francfort}).

Let us emphasize that, in our approach, we never make use of the derivative function $u'$ in (\ref{u'}), so that we can by-pass the problem of investigating the validity of the Euler-Lagrange equation for minimizers (about which we refer to the recent papers \cite{BoCeMa, DeMa} and references therein). Thus we may deal also with integral functionals whose minima satisfy just a variational inequality. As a significant example in this respect, let us consider the
functional $J _ D (\Omega)$ in (\ref{J_D}), when $\Omega$ is a planar domain, and the convex integrands $f$ and $g$ are defined by
\begin{equation}\label{IFB}
f(y):= \begin{cases} \frac{|y| ^ 2}{2} + \frac{1}{2} & \hbox { if } |y| \geq 1
\\ |y| & \hbox{ if } |y| < 1\, \end{cases} \,,  \qquad g (y ) := - \lambda y \ \ \  (\lambda \in \re).
\end{equation}
This problem, studied in our previous works \cite{ABFL, BFLS}, arises in
the shape optimization of thin rods in pure torsion regime, settled on the bar cross-section.
Due to the lackness of regularity of $f$ at the origin, the optimality condition satisfied by elements of $\mathcal S$ is not an Euler-Lagrange equation, but merely a variational inequality (see \cite[Proposition 3.1]{ABFL}).
In this situation, the classical approach fails, whereas our existence and representation results can be applied to compute the shape derivative.  Let us mention that a few references are available in the literature about shape derivatives for problems governed by variational inequalities, see \cite{HL, NSZ, SZ}.

\medskip
Finally, let us comment on some different kinds of extensions of our approach, which
go beyond the scopes of the present manuscript.

The kind of integrands  under consideration could be generalized to the case $h (x, u, \nabla u)$, with a convex dependence on $(u, \nabla u)$, and a measurable dependence on $x$.  We limited ourselves to the case $f (\nabla u) + g (u)$ just because the main concern of the paper is to illustrate the new approach without too many technicalities (especially the dependence on $x$ would make some of the proofs heavier).

Another more challenging extension concerns the case of non-convex integrands. In fact, we believe that the convexity assumption made on $g$ in this paper can be relaxed. However, this is not a purely technical variant. In fact, a major difficulty arises in this case, namely finding an appropriate {\it alter ego} of the classical dual problem: a possible attempt in this direction, which deserves further investigation, is the new duality method introduced in \cite{ABDM}.

Finally, our approach can be successfully extended to compute second order shape derivatives.
Clearly, this would require to make stronger regularity assumptions, but the principle of matching upper and lower bounds continues to work perfectly well.

\medskip
The paper is organized as follows.

Section \ref{prel} is devoted to the preliminary material: we fix the main notation, the standing assumptions, and the  basic lemmata concerning the functionals under study.

In Section \ref{sec_first} we state our main results, which are proved in Section \ref{sec_second}.

Section \ref{secapp} is an appendix where we gather, for the sake of safe-completeness, some
auxiliary results which are exploited at some point in the paper.

\bigskip

{\bf Acknowledgments.} We are grateful to A.~Henrot for pointing out some related references, and to P.~Bousquet for some interesting discussions on the topics of Remark  \ref{remLip}.

\section{Preliminaries}\label{prel}

\subsection{Notation}\label{not}

In the following $\Omega$ denotes an open bounded connected subset of $\re^n$. 

We recall that throughout the paper the notation $J(\Omega)$ is adopted each time it can be intended indistinctly as in $(\ref{J_D})$ or  as in $(\ref{J_N})$.
Similarly,  $J ^*(\Omega)$ is meant either as in
$(\ref{J_D*})$ or as in $(\ref{J_N*})$.

Only when required, we shall distinguish between the Dirichlet and the Neumann cases, indicated respectively as (D) and (N) in the sequel.

For brevity, we denote by $W(\Omega)$ the domain of admissible functions for $J (\Omega)$ (namely $W ^ {1,p} _0 (\Omega)$ in case (D) and
$W ^ {1,p} (\Omega)$ in case (N)), and by $X(\Omega; \re ^n)$ the domains of admissible vector fields for $J^* (\Omega)$ (namely the space of $L ^ {p'}$ vector fields with divergence in $L ^{q'}$ in case (D), with the additional condition $\sigma \cdot n = 0$ on the boundary in case (N)).

Moreover, we define the subsets $\mathcal S$ of $W(\Omega)$ and $\mathcal S^*$ of $X(\Omega; \re ^n)$ by
$$
\mathcal S:= \Big \{ \hbox{ solutions to  } J (\Omega) \Big \} \qquad \hbox{ and } \qquad
\mathcal S^*:= \Big \{ \hbox{ solutions to  } J ^*(\Omega) \Big \}\,.$$

Given $V\in W^{1,\infty} (\mathbb R^n;\mathbb R^n)$ and $\e>0$, we denote by $\Psi_\e$ and $\Omega _\e$ the bi-Lipschitz diffeomorphism of $\mathbb R ^n$ and the corresponding image of an open set $\Omega \subset \mathbb R ^n$ defined respectively by
$$
\Psi_\e(x):=x+\e V(x) \qquad \hbox{ and } \qquad \Omega _\e:= \Psi _\e (\Omega)\,.$$

We adopt the convention of repeated indices. Given two vectors $a,b$ in $\re^n$  and two matrices $A$ and $B$ in $\re^{n \times n}$,
we use the standard notation $a \cdot  b$ and $A:B$ to denote their Euclidean scalar products, namely
$a  \cdot  b =  a_i b _i$ and $A:B= A_{ij} B_{ij}$.
We denote by $a\otimes b$ the tensor product of $a$ and $b$, namely the matrix $(a\otimes b)_{ij}:=a_i b_j$, and by $I$ the identity matrix.
We denote by $A^{-1}$ and $A^{T}$ the inverse and the transpose matrices of $A$, and by $A^{-T}$ the transposition of the inverse of $A$.
Given a tensor field $A\in C^1(\re^n;\re^{n\times n})$, by $\div A$ we mean its divergence with respect to lines, namely
$(\div A)_i:= \partial_j A_{ij}$.

Given $1\leq p \leq +\infty$ we denote by $p'$ its conjugate exponent, defined as usual by the equality $1/p+1/p'=1$.

We denote by $\mathcal D(\Omega)$ the space of $\mathcal C^{\infty}$ functions having compact support contained into $\Omega$, and by ${\rm Lip}(\Omega)$ the space of Lipschitz functions on $\Omega$.

In the integrals, unless otherwise indicated, integration is made with respect to the $n$-dimensional Lebesgue measure. Furthermore,
in all the situations when no confusion may arise, we omit to indicate the integration variable. We denote by $\mint$ the integral mean.

Whenever we consider $L ^p$-spaces over $\Omega$ and over $\partial \Omega$, they are intended the former
with respect to the $n$-dimensional Lebesgue measure over $\Omega$, and the latter with respect to the $ (n-1)$-dimensional Hausdorff measure over $\partial \Omega$.

We now recall some fact about weak notions of traces, assuming that $\Omega$ has a Lipschitz boundary, with unit outer normal $n$. 

If $v\in W^{1,p}(\Omega)$, we denote by $\T(v)$ its trace on $\partial \Omega$, which can be characterized via the divergence theorem by
\begin{equation}\label{BV_div}
\int_{\partial \Omega} \T(v) \varphi\,n_i\, d\mathcal H^{n-1}= \int_\Omega v\, \big [
\partial_i \varphi +  \varphi \, (D_i v) \big ]\qquad \forall \varphi \in C ^ 1 (\overline \Omega)\,.
\end{equation}
The trace operator
$v \mapsto \T(v)$ is linear and bounded from $W^{1,p}(\Omega)$ to $L^p(\partial \Omega)$.
Moreover, $\T(v)$ can be computed as
$$\T(v)(x_0)=\lim_{r,\rho\to 0^+}\mint_{C^-_{r,\rho}(x_0)} v \qquad \hbox{ for $\mathcal H^{n-1}$-a.e. $x_0\in \partial \Omega$,}
$$
where $C^-_{r,\rho}(x_0)$ denotes the inner cylindrical neighborhood
\begin{equation}\label{cilindro}
C^-_{r,\rho}(x_0):=\{y\in \Omega\ :\ y=x-t n(x_0)\,,\ x\in B_\rho(x_0)\cap \partial \Omega\,,\ t\in (0,r)\}\,.
\end{equation}
In particular, in case $v\in W^{1,p}(\Omega)\cap C^0(\ov{\Omega})$, $\T(v)$ coincides with the restriction of $v$ to $\partial \Omega$.

We remark that a similar notion of trace extends to functions $v\in BV(\Omega)$, and in this case $v \mapsto \T(v)$ defines a bounded linear operator from $BV(\Omega)$ to $L^1(\partial \Omega)$, see  \cite{AFP}.

Finally, let us recall the definition of normal trace for vector fields in the class
$$
\DM(\Omega; \re^n):=\big \{\Psi \in L^\infty(\Omega;\mathbb R^n)\ :\ \div \Psi \in \mathcal M(\Omega) \big \}\,,
$$
where $\mathcal M(\Omega)$ denotes the space of Radon measures over $\Omega$, {\it cf.}\ \cite{A,CF}.
For every $\Psi \in \DM (\Omega; \re ^n)$, there exists a unique function $[\Psi\cdot n]_{\partial \Omega}\in L^\infty(\partial \Omega)$ such that
\begin{equation}\label{gen_div}
\int_{\partial \Omega} [\Psi\cdot n]_{\partial \Omega}\,\varphi\,d{\mathcal  H^{n-1}}=\int_\Omega \big [ \Psi\cdot \nabla \varphi +\varphi\,\div\Psi \big ] \qquad \forall \varphi \in C ^1 (\overline \Omega)\ .
\end{equation}
Equipped with the norm $\|\Psi\|_{\DM }:=\|\Psi\|_{\infty} + |\div \Psi|(\Omega)$, $\DM (\Omega;\mathbb R^n)$ is a Banach space, and
the normal trace operator
$
\Psi\mapsto [\Psi\cdot n]_{\partial \Omega}
$ from $\DM (\Omega; \re ^n)$ to $L^\infty(\partial \Omega)$ is linear and bounded. Moreover, we recall from  \cite[Proposition 2.2]{Aup} that, if $\partial \Omega$ is piecewise $C^1$,  $[\Psi\cdot n]_{\partial \Omega}$ can be computed as
\begin{equation}\label{pointwisenormalt}
 [\Psi\cdot n]_{\partial \Omega} (x_0) = \lim_{r,\rho\to 0^+}\mint_{C^-_{r,\rho}(x_0)} \Psi \cdot \tilde n \qquad \hbox{for $\mathcal H^{n-1}$-a.e. $x_0\in \partial \Omega$}\, ,
 \end{equation}
being $\tilde n$ the extension of $n$ to ${C^-_{r,\rho}(x_0)}$ defined by
\begin{equation}\label{estensione}
\tilde n (y) := n (x) \qquad \hbox { if } y = x - t n (x_0)\,.
\end{equation}

In particular, in case $\Psi\in \DM (\Omega; \re ^n) \cap C^0(\ov\Omega; \re ^n)$, the normal trace operator applied to $\Psi$ agrees with the normal component of the pointwise trace:
$$
[\Psi\cdot n]_{\partial \Omega}(x_0)=\Psi(x_0)
\cdot n(x_0) \qquad \forall x_0 \in  \partial \Omega\ . $$

In the sequel, we also use the notation $\DM (\Omega; \re ^{n\times n})$  and $\DM  (\Omega)$ to denote respectively the class of tensors $A$ with rows in $\DM (\Omega; \re ^{n})$, and the class of scalar functions $\psi$ with $\psi I \in  \DM (\Omega; \re ^{n\times n})$. Accordingly, we indicate by
$[A\, n]_{\partial \Omega}$ and $[\psi\, n]_{\partial \Omega}$ the normal traces of $A$ and $\psi I$ intended row by row as in (\ref{gen_div}).

For a more detailed account of the theory of weak traces, we refer the reader to \cite{ACM,A,CF}.

\subsection{Standing assumptions}\label{SA}
Throughout the paper, we work under the following hypotheses, which will be referred to as standing assumptions:

\begin{itemize}
\item[(H1)]  $\Omega$ is an open bounded connected set;

\smallskip
\item[(H2)]  $V$ is a vector field in $W ^ {1, \infty} (\re ^n; \re ^n)$;
\smallskip
\item[(H3)]
$f:\re^n\to \re$ and $g:\re\to \re$ are convex, continuous functions such that $g(0)=0$ and
\begin{equation}\label{pq}
\begin{cases}
&\alpha ( |z|^p - 1)  \leq  f(z) \leq \beta (|z|^p + 1) \qquad \forall z \in \re ^n
\\ \noalign{\medskip}
& \gamma ( |v|^p - 1)  \leq g(v) \leq \delta (|v|^{q}+1) \qquad \  \forall v \in \re\ .
\end{cases}
\end{equation}
\end{itemize}

Here $\alpha,\beta,\gamma$ are positive constants, while the exponents $p$, $q$ are assumed to satisfy
 $$
 1<p<+\infty \, ,
  \qquad
\qquad \begin{cases}
 q= p ^* : = \frac{np}{n-p} & \hbox{  if } p<n  \\
 1< q <+\infty  & \hbox{  if } p\geq n \, .\end{cases}
 $$

We remark that choosing the exponent $q$ as above makes the upper bound for $g$ in (\ref{pq})  less restrictive than the  one  asked for $f$; concerning the lower bound for $g$ in (\ref{pq}),
in the Dirichlet case it  can be relaxed to
\begin{equation}\label{weak-below}
- \gamma (|v|+1) \leq g(v) \qquad \forall v \in \re\, .
\end{equation}
Notice that a positive constant $\gamma$ such that (\ref{weak-below}) holds true exists for any real valued continuous convex function $g$, as it admits  an affine minorant.

When further assumptions on $f$ and $g$ are needed, they will be specified in each statement.

\subsection{Basic lemmata on integral functionals}

\begin{lemma}\label{I_fg}
Under the standing assumptions on $f$ and $g$, let $I _ f$ and $I _g$ be defined respectively on $L^p(\Omega;\mathbb R^n)$ and $L^q(\Omega)$ by
\begin{equation}\label{defI}
I_f(z):=\int_\Omega f(z)\qquad \hbox{ and } \qquad  I _ g (v):= \int _\Omega g (v)\, .
\end{equation}
Then:
\begin{itemize}

\item[(i)] the functionals $I_{f} (z)$ and $I _g (u)$ are  convex, finite, strongly continuous and weakly l.s.c.\ respectively on $L^p(\Omega;\mathbb R^n)$ and $L^q(\Omega) $;
\item[(ii)] the functional
$I_{f} (\nabla u) +I _g (u)$ is convex, finite, weakly coercive and weakly l.s.c.\ on $W(\Omega)$;

\item[(iii)] the sets ${\mathcal S}$ and ${\mathcal S} ^*$ of solutions to $J (\Omega) $ and $J ^ * (\Omega)$ are nonempty.
\end{itemize}
\end{lemma}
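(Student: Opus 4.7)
The plan is to treat the three items by increasing level of difficulty, reducing everything to classical facts about convex integral functionals on $L^p$ spaces combined with Sobolev embedding and the direct method.

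For part (i), convexity of $I_f$ and $I_g$ is the pointwise convexity of $f$ and $g$ integrated against a positive measure. Finiteness on $L^p(\Omega;\re^n)$ and $L^q(\Omega)$ comes directly from the upper growth bounds in (H3): $f(z)\leq\beta(|z|^p+1)$ and $g(v)\leq\delta(|v|^q+1)$ make the integrands bounded above by integrable functions. Strong continuity is the standard continuity of Nemytskii operators: under a growth bound of exponent $p$, the map $z\mapsto f(z)$ defines a continuous operator $L^p(\Omega;\re^n)\to L^1(\Omega)$, and similarly for $g$. Weak lower semicontinuity then follows from Mazur's lemma since a convex strongly lower semicontinuous functional on a normed space is also weakly lower semicontinuous.

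For part (ii), I would exploit the continuity of the linear map $u\mapsto(\nabla u,u)$ from $W(\Omega)$ into $L^p(\Omega;\re^n)\times L^q(\Omega)$. The embedding $W^{1,p}(\Omega)\hookrightarrow L^q(\Omega)$ is continuous under the choice of exponent $q$ in (H3): it is the Sobolev embedding when $p<n$ (and $q=p^*$), and it is Morrey's embedding (or a direct embedding into $L^q$ for any $q<\infty$) when $p\geq n$. Since the map is continuous also with respect to the weak topologies, convexity, finiteness, and weak lower semicontinuity transfer from (i) to the combined functional $I_f(\nabla u)+I_g(u)$. Weak coercivity requires the lower growth bounds: in the Neumann case $f(z)\geq\alpha(|z|^p-1)$ and $g(v)\geq\gamma(|v|^p-1)$ directly control $\|\nabla u\|_p^p+\|u\|_p^p$, i.e.\ the full $W^{1,p}$ norm. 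In the Dirichlet case I would only rely on $f(z)\geq\alpha(|z|^p-1)$ to control $\|\nabla u\|_p$, and recover $\|u\|_p$ by Poincar\'e's inequality on $W^{1,p}_0(\Omega)$; the relaxed lower bound (\ref{weak-below}) on $g$ is then easily absorbed because $-\gamma(\|u\|_1+|\Omega|)$ grows sublinearly in $\|u\|_p$.

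For part (iii), nonemptiness of $\mathcal S$ is an immediate application of the direct method, using the weak lower semicontinuity and weak coercivity established in (ii) (in the Neumann case, one notes additionally that a minimizing sequence can be chosen in a reflexive space on which Banach--Alaoglu applies). For $\mathcal S^*$ the argument is analogous but needs to be run on the dual side: the dual admissible class is nonempty since the zero vector field yields a finite value (the upper bounds $f(z)\leq\beta(|z|^p+1)$, $g(v)\leq\delta(|v|^q+1)$ translate into lower bounds on $f^*,g^*$ of the form $f^*(\xi)\geq c|\xi|^{p'}-c'$ and $g^*(\eta)\geq c|\eta|^{q'}-c'$, hence $f^*(0),g^*(0)<\infty$ too, as $f,g$ are bounded below), these same coercive estimates provide weak bounds on a minimizing sequence $\sigma_n$ in $L^{p'}(\Omega;\re^n)$ and on $\div\sigma_n$ in $L^{q'}(\Omega)$, part (i) applied to $f^*$ and $g^*$ yields weak lower semicontinuity of the dual integral, and one concludes by extracting a weak limit. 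The mildly delicate point, and the step I would flag as the main obstacle, is the Neumann case where the admissibility constraint $\sigma\cdot n=0$ on $\partial\Omega$ must be preserved along the weak limit: this is handled by observing that this constraint defines a weakly closed subspace of the space of admissible fields, which follows from the continuity of the normal trace operator recalled in (\ref{gen_div}) and the fact that weak convergence of $\sigma_n$ in $L^{p'}$ together with weak convergence of $\div\sigma_n$ in $L^{q'}$ is enough to pass to the limit in the distributional identity defining $[\sigma_n\cdot n]_{\partial\Omega}$.
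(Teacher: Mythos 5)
Your proposal is correct in substance, and parts (i)--(ii) and the nonemptiness of $\mathcal S$ follow essentially the same route as the paper (the paper simply cites Fonseca--Leoni for the Nemytskii/Mazur facts you spell out, and is equally terse about the embedding $W^{1,p}\hookrightarrow L^q$ and about coercivity). The genuine divergence is in the nonemptiness of $\mathcal S^*$: the paper does \emph{not} run the direct method on the dual problem. It first establishes $J(\Omega)=J^*(\Omega)$ via the Fenchel--Rockafellar duality theorem (Proposition \ref{prop_duality}), whose conclusion already includes that the dual infimum is \emph{achieved}; attainment for $J^*(\Omega)$ is thus obtained for free, with no need to discuss coercivity of $f^*,g^*$, weak closedness of the constraint $\tau=\div\sigma$, or preservation of the Neumann condition $\sigma\cdot n=0$ under weak limits. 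Your direct-method route is workable, but note one imprecision: you cannot literally invoke ``part (i) applied to $f^*$ and $g^*$,'' since the conjugates do not satisfy the standing growth assumptions of orders $p$ and $q$ ($f^*$ has two-sided growth of order $p'$, $g^*$ is only bounded above with exponent $p'$ rather than $q'$, and under the relaxed Dirichlet lower bound (\ref{weak-below}) on $g$ the conjugate $g^*$ is extended-real-valued, as in Example \ref{exBV}, so finiteness and strong continuity fail). What you actually need there is the standard weak lower semicontinuity of integral functionals with convex, l.s.c., possibly extended-valued integrands (Fatou plus Mazur, using the coercive lower bounds to dominate the negative parts), which holds but should be stated as such. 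The trade-off is clear: the paper's duality argument is shorter and sidesteps these issues entirely, while your argument is more self-contained but must handle the extended-valued integrands and the weak closedness of the admissible class (which you do correctly address for the Neumann trace condition).
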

\proof
(i) Since $f$ and $g$ are convex and continuous they admit an affine minorant, namely there exist $a,b\in \re^n$ and $\alpha,\beta \in \re$ such that, for every $z\in \re^n$, $u\in \re$,
\begin{equation}\label{af_min}
a+ b\cdot z \leq f(z)\ ,\ \ \alpha + \beta u \leq g(u)\ .
\end{equation}
Recalling that $q>1$, condition (\ref{af_min}), together with the growth assumption from above, implies that $f$ and $g$ satisfy
\begin{equation}\label{abs_growth}
|f(z)|\leq C(|z|^p + 1)\ ,\ \ |g(u)|\leq C'(|u|^q + 1)\ ,
\end{equation}
for some positive constants $C$ and $C'$, and for every $z\in \re^n$, $u\in \re$. Exploiting (\ref{af_min}), (\ref{abs_growth}) and the properties of continuity and convexity of $f$ and $g$, we infer that $I_f$ and $I_g$ are both convex, finite, strongly continuous and weakly lower semicontinuous on the functional spaces $L^p(\Omega;\re^n)$ and $L^q(\Omega)$ respectively (see \cite[Corollary 6.51 and Theorem 6.54]{FL}).

(ii) By combining statement (i) with the standing assumptions on the exponent $q$, it is easily checked that the fuctional $I_f (\nabla u)+I_g(u)$ is finite, convex and weakly lower semicontinuous on $W(\Omega)$. Moreover, the growth condition from below on $f$ and $g$ ensures the weak coercivity.

(iii) In view of (ii), the existence of a solution to $J(\Omega)$  follows from the direct method of the Calculus of Variations. Finally, the existence of at least one solution to $J^*(\Omega)$ follows from the equality $J(\Omega)=J^*(\Omega)$ (see Lemma {\ref{lemma_opt_cond}}) and the duality Proposition \ref{prop_duality}.

\qed

\begin{lemma}\label{lemma_opt_cond}
Under the standing assumptions, the functionals defined by $(\ref{J_D})$ (resp.\ $(\ref{J_N})$) and $(\ref{J_D*})$ (resp.\ $(\ref{J_N*})$) coincide, namely  there holds
\begin{equation}\label{J=J*}
J(\Omega)=J^\ast(\Omega)\ .
\end{equation}
Moreover, if ${u\in W(\Omega)}$ and $\sigma \in X (\Omega; \re ^n)$,  there holds the following equivalence:
$$
(i) \left\{
\begin{array}{lll}
{u}\in \mathcal S
\\ {\sigma} \in \mathcal S^* \end{array}
\right. \quad \Longleftrightarrow\quad (ii) \left\{ \begin{array}{lll} {\sigma}\in \partial f(\nabla {u})\ \ a.e.\ in\ \Omega \\ \div {\sigma} \in \partial g({u})\ \ a.e.\ in\ \Omega\ . \end{array} \right.
$$
\end{lemma}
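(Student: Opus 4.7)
\proof[Proof plan]
The plan is to establish the equality $J(\Omega)=J^*(\Omega)$ by a Fenchel--Rockafellar duality argument, and then to deduce the primal--dual optimality conditions from pointwise equality in the Fenchel--Young inequality.

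First I would recast the primal problem in the form $\inf_{u\in W(\Omega)}\{F(u)+G(\Lambda u)\}$, with $\Lambda u=\nabla u$, $G(z)=\int_\Omega f(z)\,dx$ on $L^p(\Omega;\re^n)$ and $F(u)=\int_\Omega g(u)\,dx$ on $W(\Omega)$. By Lemma \ref{I_fg}(i), $G$ is convex, finite and strongly continuous on $L^p$, so in particular continuous at $\nabla 0=0$, which is the standard qualification condition for strong duality. Applying Proposition \ref{prop_duality} from the appendix, and computing explicitly $G^*(\sigma)=\int_\Omega f^*(\sigma)$ on $L^{p'}$ and $F^*$ on the dual of $W(\Omega)$, one identifies $\Lambda^*\sigma=-\div\sigma$ together with the side constraint $[\sigma\cdot n]_{\partial\Omega}=0$ in case (N) (no constraint in case (D), because $u$ vanishes on $\partial\Omega$). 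In both cases this yields the dual problem $-\inf_\sigma\int[f^*(\sigma)+g^*(\div\sigma)]$, and the no-gap conclusion is precisely \eqref{J=J*}.

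The key ingredient for the equivalence $(i)\Leftrightarrow(ii)$ is the integration-by-parts identity
\[
\int_\Omega \sigma\cdot\nabla u\,dx + \int_\Omega u\,\div\sigma\,dx = 0\qquad\forall\, u\in W(\Omega),\ \sigma\in X(\Omega;\re^n),
\]
which holds in case (D) by density of $\mathcal D(\Omega)$ in $W^{1,p}_0(\Omega)$ and the pairing between $L^{q'}$ and $L^q$ (using the Sobolev embedding $W^{1,p}_0\hookrightarrow L^q$ furnished by the standing assumptions on $q$), and in case (N) directly from the definition \eqref{gen_div} of the distributional normal trace together with the condition $[\sigma\cdot n]_{\partial\Omega}=0$. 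Combined with the pointwise Fenchel--Young inequalities
\[
f(\nabla u)+f^*(\sigma)\geq \sigma\cdot\nabla u,\qquad g(u)+g^*(\div\sigma)\geq u\,\div\sigma,
\]
this yields, for every admissible pair,
\[
\int_\Omega\bigl[f(\nabla u)+g(u)\bigr]\,dx + \int_\Omega\bigl[f^*(\sigma)+g^*(\div\sigma)\bigr]\,dx\geq 0,
\]
which is the weak-duality inequality $-J(\Omega)+J^*(\Omega)\geq 0$.

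For the implication $(i)\Rightarrow(ii)$, strong duality \eqref{J=J*} forces the above inequality to be an equality when $u\in\mathcal S$ and $\sigma\in\mathcal S^*$. Since the two non-negative integrands $f(\nabla u)+f^*(\sigma)-\sigma\cdot\nabla u$ and $g(u)+g^*(\div\sigma)-u\,\div\sigma$ sum to zero in integral, both must vanish a.e., and the standard characterization of equality in Fenchel--Young gives $\sigma\in\partial f(\nabla u)$ and $\div\sigma\in\partial g(u)$ a.e. For the reverse implication $(ii)\Rightarrow(i)$, these subdifferential identities yield pointwise equality in Fenchel--Young, whence
\[
\int_\Omega\bigl[f(\nabla u)+g(u)\bigr]\,dx = -\int_\Omega\bigl[f^*(\sigma)+g^*(\div\sigma)\bigr]\,dx \leq -J^*(\Omega)=-J(\Omega),
\]
so $u$ attains the infimum in the primal and $\sigma$ in the dual. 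The only delicate point is the rigorous justification of the integration-by-parts identity in case (N) at the low regularity $\sigma\in L^{p'}$, $\div\sigma\in L^{q'}$, which is why the condition $\sigma\cdot n=0$ on $\partial\Omega$ has to be read in the distributional sense introduced in Section \ref{not}; everything else is a routine translation of the general Fenchel--Rockafellar machinery into the present setting.
\qed
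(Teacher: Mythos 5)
Your proof is correct, and it reaches both conclusions of the lemma by a route that differs from the paper's in two respects. For the no-gap equality (\ref{J=J*}), the paper applies Proposition \ref{prop_duality} with $\Phi\equiv 0$ and with \emph{both} integrands absorbed into $\Psi(z,v)=I_f(z)+I_g(v)$ through the operator $Au=(\nabla u,u)$; the divergence constraint (and the condition $\sigma\cdot n=0$ in case (N)) then drops out of the equation $A^*(\sigma,\tau)=0$, and $\Phi^*$ is trivially the indicator of the origin. You instead keep $I_g$ as the ``$\Phi$'' term on $W(\Omega)$ and conjugate only $I_f\circ\nabla$; this works and satisfies the same qualification condition at $u_0=0$, but it shifts the burden onto the computation of $F^*(-\Lambda^*\sigma)$, where you must argue (via the $q$-growth of $g$ and the density of $W(\Omega)$ in $L^q(\Omega)$) that the conjugate is $+\infty$ unless $-\Lambda^*\sigma$ is represented by some $\tau=\div\sigma\in L^{q'}(\Omega)$, with the normal-trace condition in case (N); that step deserves to be spelled out, since it is where the constraint defining $X(\Omega;\re^n)$ actually appears. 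For the equivalence $(i)\Leftrightarrow(ii)$, the paper invokes the abstract extremality relation $(\div\sigma,\sigma)\in\partial\Psi(Au)$ from Proposition \ref{prop_duality} and then decomposes the subdifferential via Propositions \ref{prop_subdif} and \ref{Ih*2}, whereas you run the elementary complementary-slackness argument: the integration-by-parts identity $\int_\Omega[\sigma\cdot\nabla u+u\,\div\sigma]=0$ (justified by density in case (D) and by the distributional normal trace in case (N)) plus pointwise Fenchel--Young forces the sum of two nonnegative integrals to vanish, hence each integrand vanishes a.e. Your version is more self-contained and makes explicit why the two pointwise subdifferential inclusions are exactly the saturation of the two Fenchel--Young inequalities, at the price of re-deriving weak duality by hand; the paper's version is shorter because it reuses the appendix machinery. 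Both are complete proofs.
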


\begin{remark}\label{rem_Fenchel}
{\rm
For a better understanding of the previous lemma let us recall that, if
 $Y$ is a normed vector space with topological dual $Y^*$, and $F:Y\to \re$ is a proper function, the sub-gradient of $F$ at a point $y\in Y$ admits the following characterizations:
\begin{equation}\label{subgrad}
y^*\in \partial F(y)\quad \Longleftrightarrow\quad y\in \partial F^*(y) \quad \Longleftrightarrow\quad F(y) + F^*(y^*) = \langle y^*,y\rangle_{Y^*,Y}\ ,
\end{equation}
where the latter condition is usually called {\it Fenchel equality}.

In view of (\ref{subgrad}), we infer that conditions (i) and (ii) in Lemma \ref{lemma_opt_cond} are also equivalent to
$$
\left\{ \begin{array}{lll} \nabla {u}\in \partial f^*({\sigma})\ \ a.e.\ in\ \Omega \\ u \in \partial g^*(\div {\sigma})\ \ a.e.\ in\ \Omega\ . \end{array} \right.
$$
}
\end{remark}

{\it Proof of Lemma \ref{lemma_opt_cond}.}
In order to prove the equality (\ref{J=J*}), we are going to apply a standard Convex Analysis Lemma, which is enclosed in the Appendix
for convenience of the reader ({\it cf.}\ Proposition \ref{prop_duality}).
Introducing the Banach spaces $Y:= W(\Omega)$, $ Z:=L^p(\Omega;\mathbb R^n)\times L^q(\Omega)$, the function $\Phi: Y \to \re$ identically zero, the function $\Psi:Z\to \mathbb R$ defined by $
\Psi(z,u):=I_f(z)+I_g(u)$, and the linear operator $A:Y \to Z$ defined by $ A(u):=(\nabla u,u)$,
we can rewrite the shape functional $J(\Omega)$ as
$$
J(\Omega)=-\inf_{u\in Y}\left\{\Psi(Au) + \Phi (u) \right\}\ .
$$
From Lemma \ref{I_fg} (i), we infer that $\Psi$ is convex, finite and sequentially continuous on $Z$.
Finally, if $u _0 \equiv 0$, it holds $\Phi(u_0)<+\infty$ and $\Psi$ is continuous at $A(u_0)$.
Then Proposition \ref{prop_duality} applies and gives
\begin{equation}\label{equality}
J (\Omega)=\inf_{(\sigma,\tau) \in Z^*} \left\{ \Psi^*(\sigma,\tau) + \Phi^*(-A^*(\sigma,\tau))  \right\}\ .
\end{equation}

Let us compute the Fenchel conjugates $\Psi^*$, $\Phi^*$ and the adjoint operator $A^*$.

By Propositions \ref{prop_subdif} and \ref{Ih*2}, for every $(\sigma,\tau)\in Z^*= L^{p'}(\Omega;\mathbb R^n)\times L^{q'}(\Omega)$ there holds
$$
\Psi^*(\sigma,\tau)=(I_f)^*(\sigma) + (I_g)^*(\tau)=I_{f^*}(\sigma) + I_{g^*}(\tau)\ .
$$

Since $\Phi\equiv 0$, its Fenchel conjugate $\Phi ^*$ is $0$ at $0$ and $+\infty$ otherwise.

As an element of $Y^*$, $A^*(\sigma,\tau)$ is characterized by its action on the elements of $Y$: since
$$
\langle A^*(\sigma,\tau),u\rangle_{Y^*,Y}
= \langle (\sigma,\tau), A(u)\rangle_{Z^*,Z}=\langle \tau, u\rangle_{L^{q'},L^q} +
\langle \sigma, \nabla u\rangle_{L^{p'},L^p}
= \int_\Omega \tau\,u + \sigma \cdot \nabla u \ ,
$$
we infer that $A^*(\sigma,\tau)=0$ if and only if $\tau=\div \sigma$ (with the additional condition $\sigma \cdot n=0$ in the sense of distributions on the boundary in case (N)).
Hence the r.h.s.\ of (\ref{equality}) agrees with $J _ D ^* (\Omega)$ in case $(D)$, and with $J _ N ^* (\Omega)$
in case (N).
We conclude that (\ref{J=J*}) holds.

It remains to check the equivalence between conditions (i) and (ii). By Proposition \ref{prop_duality}, condition (i) holds true if and only if  $(\div \sigma,\sigma)\in \partial \Psi(A(u))$.
In view of Proposition \ref{prop_subdif}, there holds
$
\partial \Psi(A({u}))= \partial I_f(\nabla u)\times \partial I_g (u)
$,
and hence, by Proposition \ref{Ih*2}, we have  $(\div \sigma, \sigma)\in \partial \Psi(A( u))$ if and only if condition (ii) holds true.

\qed

We now endow $W (\Omega)$ and $X(\Omega; \re ^n)$ respectively with the following convergence, which in both cases will be simply called  {\it weak convergence}:
\begin{eqnarray}
& u_k\stackrel{W^{1,p}}{\rightharpoonup} u_0\ , \label{G_conv}
\\
&\sigma_k \stackrel{L^{p'}}{\rightharpoonup} \sigma_0\ ,\qquad \div\sigma_k\stackrel{L^{q'}}{\rightharpoonup} \div\sigma_0\ . \label{G_conv*}
\end{eqnarray}

\begin{lemma}\label{compactness}
 Under the standing assumptions, the sets $\mathcal S$ and $\mathcal S ^*$ are weakly compact respectively in $W(\Omega)$ and $X (\Omega; \re ^n)$.
\end{lemma}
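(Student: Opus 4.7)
The plan is to handle $\mathcal{S}$ and $\mathcal{S}^*$ separately, in both cases by extracting a weakly convergent subsequence from an arbitrary sequence and then using lower semicontinuity to conclude the limit is still optimal.

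For $\mathcal{S}$: given a sequence $\{u_k\}\subset\mathcal{S}$, each $u_k$ satisfies $I_f(\nabla u_k)+I_g(u_k)=-J(\Omega)$. By the weak coercivity on $W(\Omega)$ from Lemma \ref{I_fg}(ii), the sequence is bounded in $W^{1,p}$, so up to a subsequence $u_k\rightharpoonup u_0$ in $W^{1,p}$. Since $W(\Omega)$ is weakly closed (trivially in case (N), and because $W^{1,p}_0$ is weakly closed in case (D)), $u_0\in W(\Omega)$. Weak lower semicontinuity of $I_f(\nabla\cdot)+I_g(\cdot)$ then yields $I_f(\nabla u_0)+I_g(u_0)\le -J(\Omega)$, and admissibility forces equality, so $u_0\in\mathcal{S}$.

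For $\mathcal{S}^*$: the main point is obtaining the required a priori estimates. The upper growth assumption $f(z)\le \beta(|z|^p+1)$ implies by Fenchel conjugation the coercive lower bound $f^*(\xi)\ge c_1|\xi|^{p'}-c_2$, and similarly $g(v)\le\delta(|v|^q+1)$ gives $g^*(\tau)\ge c_3|\tau|^{q'}-c_4$. Since for $\sigma_k\in\mathcal{S}^*$ the sum $I_{f^*}(\sigma_k)+I_{g^*}(\div\sigma_k)$ is constantly equal to $J^*(\Omega)$, we obtain boundedness of $\sigma_k$ in $L^{p'}(\Omega;\re^n)$ and of $\div\sigma_k$ in $L^{q'}(\Omega)$. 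Extracting a subsequence, $\sigma_k\rightharpoonup\sigma_0$ in $L^{p'}$ and $\div\sigma_k\rightharpoonup\tau_0$ in $L^{q'}$; passing to the limit in the distributional definition of divergence identifies $\tau_0=\div\sigma_0$, so the convergence takes place in the sense of (\ref{G_conv*}).

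It remains to check that $\sigma_0\in\mathcal{S}^*$. In case (N), the constraint $\sigma_k\cdot n=0$ is preserved in the limit: applying the divergence identity (\ref{gen_div}) to each $\sigma_k$ with an arbitrary $\varphi\in C^1(\overline\Omega)$ and passing to the limit using the two weak convergences gives $\int_{\partial\Omega}[\sigma_0\cdot n]_{\partial\Omega}\varphi\,d\mathcal{H}^{n-1}=0$, whence $[\sigma_0\cdot n]_{\partial\Omega}=0$. Finally, the convex functionals $I_{f^*}$ and $I_{g^*}$ are strongly continuous and weakly lower semicontinuous on $L^{p'}$ and $L^{q'}$ respectively (same argument as Lemma \ref{I_fg}(i), noting that $f^*$ and $g^*$ are everywhere finite with $p'$- and $q'$-growth), so $I_{f^*}(\sigma_0)+I_{g^*}(\div\sigma_0)\le\liminf[I_{f^*}(\sigma_k)+I_{g^*}(\div\sigma_k)]=J^*(\Omega)$, and admissibility of $\sigma_0$ gives equality. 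The only step requiring some care, and really the only non-routine step, is the derivation of the coercivity estimates for $f^*$ and $g^*$ from the upper $p$- and $q$-growth of $f$ and $g$; once these are in hand, everything else is standard weak compactness plus semicontinuity.
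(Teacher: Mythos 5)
Your argument for $\mathcal S$ is correct and is exactly the paper's: coercivity from Lemma \ref{I_fg}(ii) gives boundedness, and weak lower semicontinuity passes optimality to the limit. For $\mathcal S^*$ your route is genuinely different from the paper's, and both work. You run the direct method on the dual problem: you conjugate the upper growth bounds $f(z)\le\beta(|z|^p+1)$ and $g(v)\le\delta(|v|^q+1)$ to get $f^*(\xi)\ge c_1|\xi|^{p'}-c_2$ and $g^*(\tau)\ge c_3|\tau|^{q'}-c_4$ (these bounds also give that each of $I_{f^*}(\sigma_k)$, $I_{g^*}(\div\sigma_k)$ is bounded below, so the constancy of their sum forces each term, hence each norm, to stay bounded), then extract weak limits, identify $\div\sigma_0$ distributionally, observe that the normal-trace constraint in case (N) is weakly closed, and conclude by weak lower semicontinuity. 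The paper instead never touches coercivity of the conjugates at this stage: it fixes one $u_0\in\mathcal S$, rewrites $\mathcal S^*$ via the optimality system of Lemma \ref{lemma_opt_cond} as the set of $\sigma$ with $\sigma\in\partial I_f(\nabla u_0)$ and $\div\sigma\in\partial I_g(u_0)$, and invokes Proposition \ref{clarke} (weak* compactness of the subdifferential of a continuous convex function) together with the weak closedness of the constraint $\tau=\div\sigma$. The paper's argument is shorter because the compactness comes packaged in Clarke's proposition and the continuity of $I_f$, $I_g$ was already established in Lemma \ref{I_fg}(i); yours is more self-contained and elementary, at the price of the explicit conjugate estimates --- which, incidentally, the paper ends up deriving anyway in the proof of the equicoercivity of $H_\e$ in Proposition \ref{lemma_Gamma}(ii), so nothing is wasted. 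Two minor points of hygiene: the identity you invoke to pass the condition $\sigma\cdot n=0$ to the limit should be the distributional definition of the normal trace for $L^{p'}$ fields with $L^{q'}$ divergence (as in the proof of Lemma \ref{lemma_opt_cond}) rather than \eqref{gen_div}, which is stated for the class $\DM(\Omega;\re^n)$ of bounded fields; and for the weak lower semicontinuity of $I_{f^*}$, $I_{g^*}$ you only need convexity plus an affine minorant (automatic for conjugate functions), so the growth of $g^*$ need not be tracked precisely.
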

\proof

Let $u_k$ be a sequence of elements in  $\mathcal S$. By the coercivity statement in Lemma \ref{I_fg} (ii), the sequence is bounded in $W(\Omega)$, hence it admits a subsequence which converges in the weak $W^{1,p}$-topology  to some $u\in W(\Omega)$. By the l.s.c. statement in Lemma \ref{I_fg} (ii),  we infer that also the limit function $u$ belongs to $\mathcal S$.

\medskip

In view of Lemma \ref{lemma_opt_cond}, we can rewrite the set $\mathcal S^*$ as
\begin{equation}\label{S*}
\mathcal S^*=\{\sigma \in X (\Omega; \re ^n) \ :\ \sigma \in \partial f(u_0)\  a.e. \, , \  \div \sigma \in \partial g(u_0)\  a.e. \}\ ,
\end{equation}
with $u_0$ arbitrarily chosen in $ \mathcal S$. By  Lemma \ref{I_fg} (i),
the functionals $I _f$ and $I _g$ defined in (\ref{defI})
are convex and strongly continuous on $L ^ p (\Omega; \re ^n)$ and $L ^ q (\Omega; \re ^n)$. Then we can apply Proposition \ref{clarke} to infer that the sets
$\partial f(\nabla u_0)$ and $\partial g(u_0)$ are weakly compact respectively in
$L^{p'}(\Omega;\mathbb R^n)$ and in $L^{q'}(\Omega)$.
Hence, exploiting the characterization (\ref{S*}) and taking into account that the constraint $\tau = \div \sigma$ is weakly closed, we conclude that $\mathcal S^*$ is weakly compact in $X (\Omega; \re ^n)$.

\qed

\section{Main results}\label{sec_first}

We begin by introducing the following tensor which will play a crucial role in the sequel.

\begin{definition}\label{A1} {\rm  For any $(u, \sigma) \in \mathcal S \times\mathcal S^*$, we set
$$
A (u,  {\sigma} ):= \nabla  u \otimes \sigma - [f(\nabla u)+g( u)]\,I  \ .
$$ }
\end{definition}

\begin{remark}\label{A2}
{\rm

\begin{itemize}
\item[(i)] Thanks to the growth conditions (\ref{pq}) satisfied by $f$ and $g$, it is easy to check that $A( u,  \sigma)\in L^1(\Omega;\re^{n\times n})$.

\item[(ii)] By using the Fenchel equality satisfied by $u$ and $\sigma$ ({\it cf.} Lemma \ref{lemma_opt_cond} (ii)), the tensor $A(u,\sigma)$ can be rewritten as
$$A (u,  {\sigma} )= \nabla  u \otimes \sigma + [f^*(\sigma)+g^*(\div \sigma) - \nabla u\cdot \sigma - u\div \sigma]\,I  \ .
$$
\item[(iii)] In case $f$  is Gateaux differentiable except at most in the origin, the optimality condition $ \sigma \in \partial f(\nabla  u)$ holding for all $(u, \sigma) \in \mathcal S \times \mathcal S^*$ determines uniquely $ \sigma$ (as $\nabla f (\nabla  u)$) in the set $\{\nabla u\neq 0\}$. Therefore in this case the tensor $A(u, \sigma)$ turns out to be independent of $\sigma$, and as such it will be simply denoted by $A (u)$. Namely, when $f$  is Gateaux differentiable except at most in the origin, for any $u \in \mathcal S$ we set
\begin{equation}\label{A(ubar)}
A ( u ) := \nabla u \otimes \nabla f (\nabla u) - [f(\nabla u)+g( u)]\,I   \,.
\end{equation}
This tensor $A(u)$ is sometimes called {\em energy-momentum tensor} ({\it cf.}\ \cite{francfort}).
\end{itemize}
}
\end{remark}

\medskip

We are now in a position to state our main results.

\begin{theorem}\label{thm_firstder} {\bf (existence of the shape derivative)}

Under the standing assumptions, the shape derivative of the functional $J(\cdot)$ at $\Omega$ in direction $V$ defined according to $(\ref{deri1})$ exists. Actually, for every $V \in W ^ {1, \infty} (\re ^n; \re ^n)$, the following inf-sup and sup-inf agree and are equal to $J' (\Omega, V)$:
\begin{equation}\label{J'}
J'(\Omega,V)=  \sup _{ u \in \mathcal S} \inf _{\sigma \in \mathcal S ^*}  \int_\Omega A (u, \sigma):DV =   \inf _{\sigma \in \mathcal S ^*}  \sup _{ u \in \mathcal S} \int_\Omega A (u, \sigma):DV \,.
\end{equation}
Moreover, there exists a saddle point $(u^\star, \sigma ^\star) \in \mathcal S \times \mathcal S ^*$ at which the inf-sup and sup-inf above are attained.
\end{theorem}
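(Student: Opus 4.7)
My plan is to establish the two asymptotic bounds (\ref{q-}) and (\ref{q+}) announced in the introduction, and then close the argument via the trivial minimax inequality $\sup_u\inf_\sigma\le\inf_\sigma\sup_u$, which forces the sandwich to collapse. Both bounds rest on the strong duality $J(\Omega)=J^*(\Omega)$ of Lemma~\ref{lemma_opt_cond} together with a change of variables under $\Psi_\e$ that pulls the problem on $\Omega_\e$ back to the reference domain $\Omega$.

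For the upper bound (\ref{q+}) I would fix $\sigma\in\mathcal S^*$ and test the dual inequality $J^*(\Omega_\e)\le\int_{\Omega_\e}[f^*(\tilde\sigma_\e)+g^*(\div\tilde\sigma_\e)]$ with the Piola transport
\[
\tilde\sigma_\e(y) := \frac{1}{J_\e(x)}\,D\Psi_\e(x)\,\sigma(x),\qquad y=\Psi_\e(x),\quad J_\e:=|\det D\Psi_\e|,
\]
tailored so that $(\div_y\tilde\sigma_\e)\circ\Psi_\e=J_\e^{-1}\div\sigma$, and so that in the Neumann case the vanishing normal trace passes from $\partial\Omega$ to $\partial\Omega_\e$. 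Changing variables, subtracting $J(\Omega)=\int_\Omega[f^*(\sigma)+g^*(\div\sigma)]$, and dividing by $\e$, the $\limsup$ is obtained by dominated convergence (dominants supplied by (H3)) and the directional-derivative formulas for the convex conjugates $f^*,g^*$. Coupling with any $u\in\mathcal S$ through the Fenchel equalities $\nabla u\in\partial f^*(\sigma)$ and $u\in\partial g^*(\div\sigma)$ from Lemma~\ref{lemma_opt_cond} lets me identify the resulting integrand with $A(u,\sigma):DV$; taking supremum over $u$ and infimum over $\sigma$ yields (\ref{q+}).

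The symmetric argument for (\ref{q-}) fixes $u\in\mathcal S$ and transports by $\tilde u_\e:=u\circ\Psi_\e^{-1}\in W(\Omega_\e)$, admissible in the Dirichlet case because $\Psi_\e$ is a bi-Lipschitz diffeomorphism preserving vanishing boundary traces (and trivially so in the Neumann case). From $J(\Omega_\e)\ge-\int_{\Omega_\e}[f(\nabla\tilde u_\e)+g(\tilde u_\e)]$ and $J(\Omega)=-\int_\Omega[f(\nabla u)+g(u)]$, the change of variables gives
\[
q_\e(V)\ge \frac{1}{\e}\int_\Omega\Big\{(f(\nabla u)+g(u))(1-J_\e)+[f(\nabla u)-f(D\Psi_\e^{-T}\nabla u)]J_\e\Big\}\,dx,
\]
and the $\liminf$ is computed using the directional derivative of $f$ together with $\sigma\in\partial f(\nabla u)$ a.e., which holds for every $\sigma\in\mathcal S^*$ by Lemma~\ref{lemma_opt_cond}.

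The sandwich from (\ref{q-}), (\ref{q+}) and the minimax inequality forces all four quantities to coincide, so $J'(\Omega,V)=\lim q_\e(V)$ exists and equals the common value. Existence of a saddle point $(u^\star,\sigma^\star)$ then follows from weak compactness of $\mathcal S$ and $\mathcal S^*$ (Lemma~\ref{compactness}) applied to nearly-optimal sequences: the bilinear part $\int\nabla u\otimes\sigma:DV$ is jointly continuous for the weak topologies of $W^{1,p}\times L^{p'}$ because $DV\in L^\infty$, while $u\mapsto\int(f(\nabla u)+g(u))\div V\,dx$ is continuous on $\mathcal S$ since $I_f+I_g$ is constant on the convex set of minimizers, forcing pointwise affinity of $f(\nabla u)+g(u)$ along segments in $\mathcal S$. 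The main obstacle I anticipate is the precise identification of the limit integrands as $A(u,\sigma):DV$ in the non-smooth setting, which requires a delicate combination of directional derivatives of the convex conjugates with the Fenchel equalities and a measurable selection argument converting pointwise subdifferential optima into functional optima over $\mathcal S^*$ and $\mathcal S$.
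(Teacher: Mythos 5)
There is a genuine gap, and it sits exactly where you flag your ``main obstacle'': the identification of the limit integrands with $A(u,\sigma):DV$. Because you transport \emph{competitors} on both sides (a Piola transport of a fixed $\sigma\in\mathcal S^*$ for the upper bound, $u\circ\Psi_\e^{-1}$ for a fixed $u\in\mathcal S$ for the lower bound), your difference quotients converge to integrals of \emph{one-sided directional derivatives} of the convex functions $f^*$ and $f$. These are the support functions of the pointwise subdifferentials: $(f^*)'(\sigma;d)=\sup\{z\cdot d:\ z\in\partial f^*(\sigma)\}$ and $-f'(\nabla u;-d)=\inf\{z\cdot d:\ z\in\partial f(\nabla u)\}$. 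The Fenchel equalities only give $\nabla u\in\partial f^*(\sigma)$ and $\sigma\in\partial f(\nabla u)$, hence $(f^*)'(\sigma;d)\ \ge\ \nabla u\cdot d$ and $-f'(\nabla u;-d)\ \le\ \sigma\cdot d$: both inequalities point the wrong way. Concretely, your upper bound reads $\limsup q_\e\le U$ with $U\ge \inf_{\sigma}\sup_{u}\int_\Omega A(u,\sigma):DV$, and your lower bound reads $\liminf q_\e\ge L$ with $L\le \sup_u\inf_\sigma\int_\Omega A(u,\sigma):DV$, so $L\le U$ always and the sandwich need not close. No measurable selection argument can repair this: the pointwise optima in $\partial f^*(\sigma(x))$ are realized by selections that need not be gradients of elements of $\mathcal S$ (they need not even be curl-free), and the pointwise optima in $\partial f(\nabla u(x))$ by fields that need not have divergence in $L^{q'}$, let alone lie in $\mathcal S^*$. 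Your scheme does work when $f$ and $f^*$ are differentiable (the subdifferentials are singletons and all the above inequalities become equalities), but the standing assumptions only require convexity and continuity, and the nonsmooth case (e.g.\ the integrand (\ref{IFB})) is precisely the point of the theorem.

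The paper's proof is asymmetric in a way that avoids directional derivatives altogether. For the upper bound it writes $q_\e=(-\inf E_\e-\inf H)/\e$, takes the \emph{actual minimizer} $u_\e$ of the transported primal functional $E_\e$ on the fixed domain, and bounds $E_\e(u_\e)$ from below via the Fenchel \emph{inequality} against a fixed $\sigma\in\mathcal S^*$; this inequality is linear in $\nabla u_\e$, the $O(1/\e)$ terms cancel by integration by parts ($\int_\Omega[\sigma\cdot\nabla u_\e+u_\e\div\sigma]=0$), and the $\Gamma$-convergence of $E_\e$ to $E$ (Proposition \ref{lemma_Gamma}) identifies the weak limit $u_0$ of $u_\e$ as an element of $\mathcal S$, giving $\limsup q_\e\le\int_\Omega A(u_0,\sigma):DV$ for \emph{every} $\sigma\in\mathcal S^*$, hence $\le\sup_u\inf_\sigma$. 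The lower bound is symmetric with $\sigma_\e\in\mathrm{Argmin}(H_\e)$ and a fixed $u\in\mathcal S$. If you want to salvage your strategy you must replace at least one transported competitor by the genuine minimizer of the transported problem and prove the compactness/$\Gamma$-convergence statement that your outline currently omits. A minor further point: for the saddle point, weak continuity of $u\mapsto\int_\Omega A(u,\sigma):DV$ on $\mathcal S$ does not follow from affinity of $f(\nabla u)+g(u)$ along segments of $\mathcal S$; one should use the rewriting of $A(u,\sigma)$ via the Fenchel equality (Remark \ref{A2}(ii)), which makes the functional affine and weakly continuous in each variable, and then invoke the minimax theorem on the weakly compact convex sets $\mathcal S$ and $\mathcal S^*$.
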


\medskip
\begin{remark}\label{remnonlin} {\rm  In general, equality (\ref{J'}) does not allow to conclude that $V \mapsto J ' ( \Omega, V)$ is a linear form, since {\it a priori}
the pair $(u ^ \star, \sigma ^ \star)$ depends on $V$. However, the linearity of the shape derivative in $V$ can be asserted in one of the following situations:

\smallskip
-- when both primal and dual problems have a unique solution (as in this case both $\mathcal S$ and $\mathcal S^*$ are singletons);

\smallskip
-- when the primal problem has a unique solution $\overline u$, and $f$ is Gateaux differentiable except at most at the origin
(as in this case $\mathcal S$ is a singleton, and the tensor $A$ depends only on $\overline u$ ).

\smallskip
In particular, in the latter case we are going to see that, under some additional regularity assumptions on $\ov u$ and $\partial \Omega$, the shape derivative can also be recast as a boundary integral depending linearly on the normal component of $V$ on the boundary, see Theorem \ref{cor_firstder} below.
}
\end{remark}

\medskip
As a complement to Remark \ref{remnonlin}, we exhibit below an example of shape functional whose derivative $J' (\Omega, V)$ is {\it not} a linear form in $V$. We are aware that such functional does not respect the growth conditions (\ref{pq}), but we were unable to individuate an equally simple one-dimensional example fitting our standing assumptions.
On the other hand, the extension of our results to the limit case $p=1$ is likely possible but requires many additional technicalities so that it goes beyond the purpose of this work.

\begin{example}\label{exBV} {\rm
Let $J(\Omega)$ be the one-dimensional shape functional given on open sets $\Omega \subset \mathbb R$ by
$$
J (\Omega) := - \inf \Big \{  \int _{\Omega} [ \, |u'| + ( 1- u ) _+ \, ] \ \ :\ \  u \in W^{1, 1}_0 (\Omega) \Big \} \, ,
$$
where $( \ \cdot\   ) _+$ stands for the positive part.

We claim that, for every $a>0$ and for every deformation $V\in W ^{1, \infty} (\re)$, there holds:
\begin{eqnarray}
& J \big ( (0, a) \big )\  =  \ - \min \big \{ 2, a \} & \label{explicitj}
\\  \noalign{\medskip}
& J '\big ((0, 2), V \big )\  =  \  (V(0)- V(2))_+\,.  \label{ex_J'}
\end{eqnarray}
Notice that the validity of (\ref{ex_J'}) implies in particular that $J'\big ((0, 2), V \big )$ does not depend linearly on $V$. Indeed,
setting $m (a) := J \big ( (0, a) \big )$, there holds
$m' _-  (2) = -1 \neq  0 = m'  _+(2)$. Moreover, taking $V= V (x)$ as the sum of two deformations compactly supported respectively near $x=0$ and $x= 2$, we see from (\ref{ex_J'}) that $J '\big ((0, 2), V \big )$ also fails to be linear with respect to variations with disjoint supports.

Let us prove (\ref{explicitj}) and (\ref{ex_J'}). The equality (\ref{explicitj}) readily follows by applying Proposition \ref{prop_duality} (with $X=W^{1,1}_0(\Omega)$ and $Y=L^1(\Omega)$). Indeed we get for $m(a)$ the dual formulation
\begin{equation*}
m (a) = \inf \left\{\int_0^a g^*(\sigma')\ :\ \sigma\in {\rm Lip}(0,a)\,,\ |\sigma|\leq 1\ \hbox{a.e.}\right\}
\end{equation*}
being the Fenchel conjugate $g^*$ of $g$ given by $g^*(\xi)=\xi$ in $[-1,0]$ and $+\infty$ elsewhere.
It is then easy to check that
\begin{align}
m(a) &= \inf \left\{\int_0^a \sigma' \ :\ \sigma\in {\rm Lip}(0,a)\ ,\ |\sigma|\leq 1\ , \ -1\leq \sigma'\leq 0\ \hbox{a.e.}\right\} \label{ex_dual}
\\ \nonumber
& = \inf \left\{\sigma(a)-\sigma(0)\ :\ \sigma\in {\rm Lip}(0,a)\ ,\ |\sigma|\leq 1\ ,\ -1\leq \sigma'\leq 0\ \hbox{a.e.}\right\} = -\min \{2,a\}\ .
\end{align}

The equality (\ref{ex_J'}) is a straightforward consequence of (\ref{explicitj}) and the fact that, if $\Omega$ is an interval of length $a$, the value of $J(\Omega)$ depends only on $a$.

Finally, let us discuss the validity of representation formula (\ref{J'}) for $\Omega=(0,2)$; more precisely, let us show how
the representation formula (\ref{J'}) allows to recover the equality  (\ref{ex_J'}) .

We observe that, for  $a=2$, the dual problem (\ref{ex_dual}) has a unique solution, namely $\mathcal S^*$ is a singleton and reads
\begin{equation}\label{ex_S*}
\mathcal S^*=\left\{\overline{\sigma}=-x-1\right\}\ .
\end{equation}
On the other hand, we notice that for $a<2$ the unique solution of the primal problem is the constant zero, whereas for $a\geq 2$ no solution exists, that is,
$\mathcal S $ is empty.
However, following De Giorgi (see for instance \cite{ET}, Chap V, Sec 2.3), we may relax the boundary condition in the primal problem as follows:
\begin{equation*}\label{ex_relaxed}
m(a)=-\inf\left\{ \int_0^a [|u'| + (1-u)_+]\,dx + |u(0)| + |u(a)| \ :\ u\in W^{1,1} (0,a)   \right\}.
\end{equation*}
Indeed, if $\tilde m(a)$ denotes the right hand side infimum above, then obviously $\tilde m(a) \ge m(a)$. The  opposite inequality
can be derived by observing that, for every  $u\in W^{1,1}(0,a)$ and every competitor $\sigma$ for \eqref{ex_dual}, one has
 $$\int_0^a [|u'| + (1-u)_+]\,dx + |u(0)| + |u(a)|\ \ge\  \int_0^a [u'\sigma + (u-1) \sigma']\, dx  + |u(0)| + |u(a)|\ \ge \ - \int_0^a \sigma'.$$

For   $a=2$, the associated set of ``relaxed'' solutions turns out to be
\begin{equation}\label{ex_S}
\widetilde{\mathcal S}=\{u_\lambda\equiv \lambda\ :\ \lambda \in [0,1]\}\ .
\end{equation}

By combining (\ref{ex_S*}) and (\ref{ex_S}), we infer that the family of tensors introduced in Definition \ref{A1} depends only on the parameter $\lambda\in [0,1]$. An easy computation leads to
\begin{align*}
A(u_\lambda,\overline{\sigma}) & = u_\lambda'\overline{\sigma} - [f(u_\lambda')+g(u_\lambda)]= \lambda -1\ .
\end{align*}
Eventually we find that, by applying the min-max formula (\ref{J'}) (in which we substitute $\mathcal S$ by  $\widetilde{\mathcal S}$),
we recast the shape derivative in (\ref{ex_J'}):
$$
\sup_{u\in \widetilde{\mathcal S}}\inf_{\sigma\in \mathcal S^*} \int_0^2 A(u,\sigma)\,V' = \sup_{\lambda\in [0,1]} \int_0^2 A(u_\lambda,\overline{\sigma})V'  =\sup_{\lambda\in [0,1]} (V(2)-V(0)) \, (\lambda-1) = (V(0)-V(2))_+\,.
$$
}

\end{example}

\medskip

Related to the conservation law (\ref{divnulla}) announced in Section 1, a nice consequence of Theorem \ref{thm_firstder} is the following:

\begin{corollary} \label{optiA}  {\bf (conservation laws)}

Under the standing assumptions, there holds:

\smallskip
(i) For every $\ov u \in \mathcal  S$,  there exists  $\wh \sigma = \wh \sigma (\ov u) \in\mathcal S ^*$ such that
\begin{equation}\label{divfree1}
\div A (\ov u, \wh \sigma)=0  \qquad \hbox{ in } \mathcal D'(\Omega; \re ^n)\, .
\end{equation}
In particular, in case $f$ is Gateaux differentiable except at most at the origin, for every $\ov u \in \mathcal S$ there holds
\begin{equation}\label{divfree2}
\div  A (\overline u) = 0 \qquad \hbox { in } {\mathcal D} ' (\Omega; \re ^n)\,.
\end{equation}

\smallskip
(ii) For every $\ov  \sigma \in \mathcal S ^*$, there exists $\wh u = \wh u (\ov \sigma)\in\mathcal S$ such that
\begin{equation}\label{divfree3}
\div A (\wh u, \ov \sigma)=0  \qquad \hbox{ in } \mathcal D'(\Omega; \re ^n)\,.
\end{equation}
\end{corollary}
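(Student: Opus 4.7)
The strategy is to exploit horizontal variations. For any $V \in \mathcal D(\Omega, \re^n)$, the map $\Psi_\varepsilon = {\rm id} + \varepsilon V$ coincides with the identity outside the compact set $\mathrm{supp}(V) \subset\subset \Omega$, so for $\varepsilon$ small it maps $\Omega$ onto itself. Hence $\Omega_\varepsilon = \Omega$, $q_\varepsilon(V) \equiv 0$, and Theorem \ref{thm_firstder} gives $J'(\Omega, V) = 0$, which makes both the sup-inf and inf-sup in (\ref{J'}) vanish. Applying this for $V$ and for $-V$ yields the two sign bounds, valid for every $V \in \mathcal D(\Omega, \re^n)$:
\[
\sup_{\sigma \in \mathcal S^*} \int_\Omega A(u, \sigma): DV \;\geq\; 0 \quad \forall u \in \mathcal S, \qquad \sup_{u \in \mathcal S} \int_\Omega A(u, \sigma): DV \;\geq\; 0 \quad \forall \sigma \in \mathcal S^*.
\]
The main obstacle will be to upgrade this pointwise sign information (valid for each $V$ separately) to the existence of a \emph{single} $\hat\sigma$ (resp.\ $\hat u$) making $\div A$ vanish against every $V$ simultaneously; this will be achieved via a Hahn-Banach separation argument.

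For (i), fix $\bar u \in \mathcal S$ and set
\[
K_{\bar u} := \{\div A(\bar u, \sigma) : \sigma \in \mathcal S^*\} \subset \mathcal D'(\Omega, \re^n).
\]
Since $\sigma \mapsto A(\bar u, \sigma)$ is affine, $\mathcal S^*$ is convex and weakly compact by Lemma \ref{compactness}, and $\sigma \mapsto \div A(\bar u, \sigma)$ is continuous from $L^{p'}$-weak to $\mathcal D'$-weak-$*$ (as for $V \in \mathcal D$ one has $\langle \div A(\bar u, \sigma), V\rangle = -\int \sigma\cdot (DV)^T \nabla \bar u + \mathrm{const}$, pairing $\sigma \in L^{p'}$ against $(DV)^T \nabla \bar u \in L^p$), the set $K_{\bar u}$ is convex and weakly-$*$ compact. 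If $0 \notin K_{\bar u}$, Hahn-Banach in the locally convex space $\mathcal D'$-weak-$*$ (whose continuous linear forms are evaluations at elements of $\mathcal D(\Omega, \re^n)$) provides $V$ and $\alpha > 0$ with $\langle \div A(\bar u, \sigma), V \rangle \geq \alpha$ for every $\sigma$, i.e.\ $\int A(\bar u, \sigma): DV \leq -\alpha < 0$ for every $\sigma \in \mathcal S^*$. This forces $\sup_\sigma \int A(\bar u, \sigma): DV \leq -\alpha$, contradicting the first sign bound at $u = \bar u$. Hence $0 \in K_{\bar u}$, giving the desired $\hat\sigma = \hat\sigma(\bar u)$. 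When $f$ is Gateaux differentiable except possibly at the origin, Remark \ref{A2}(iii) identifies $A(\bar u, \hat\sigma)$ with the $\sigma$-independent tensor $A(\bar u)$, yielding (\ref{divfree2}).

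For (ii), I would mimic the argument with $u$ and $\sigma$ swapped. The sole subtlety is that $u \mapsto A(u, \bar\sigma)$ is not manifestly affine, because of the nonlinear terms $f(\nabla u)$ and $g(u)$. However, the Fenchel equalities valid on $\mathcal S \times \mathcal S^*$ (Remark \ref{A2}(ii)) allow, for every $u \in \mathcal S$, the rewriting
\[
A(u, \bar\sigma) = \nabla u \otimes \bar\sigma + \big[f^*(\bar\sigma) + g^*(\div \bar\sigma) - \nabla u \cdot \bar\sigma - u \div \bar\sigma\big]\, I,
\]
which \emph{is} affine in $u$ on the convex set $\mathcal S$. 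The weak compactness of $\mathcal S$ in $W(\Omega)$ from Lemma \ref{compactness}, combined with the second sign bound above, then yields by the same Hahn-Banach separation some $\hat u = \hat u(\bar\sigma) \in \mathcal S$ with $\div A(\hat u, \bar\sigma) = 0$ in $\mathcal D'(\Omega, \re^n)$.
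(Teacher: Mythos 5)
Your proposal is correct, and it reaches the conclusion by a technically different route than the paper, although the essential ingredients coincide. Both arguments start from the same observation (for $V\in\mathcal D(\Omega;\re^n)$ the deformation leaves $\Omega$ unchanged, so $J'(\Omega,V)=0$ and the min--max representation of Theorem \ref{thm_firstder} yields sign information), and both exploit the convexity and weak compactness of $\mathcal S^*$ (resp.\ $\mathcal S$) together with the affinity of $\sigma\mapsto A(\ov u,\sigma)$ (resp.\ of $u\mapsto A(u,\ov\sigma)$ after the Fenchel rewriting of Remark \ref{A2}(ii) -- you correctly flagged this as the only subtlety in part (ii), and it is exactly how the paper handles it too). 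The difference is in how a \emph{single} $\wh\sigma$ is produced: the paper introduces the Lagrangian $L_{\u}(V,\sigma):=-\int_\Omega A(\u,\sigma):DV$ on $\mathcal D(\Omega;\re^n)\times\mathcal S^*$ and invokes the abstract minimax Proposition \ref{infsup} to exchange $\inf_V$ and $\sup_\sigma$, obtaining $\wh\sigma$ as the maximizer realizing the sup--inf; you instead form the convex, weak-$*$ compact set $K_{\u}=\{\div A(\u,\sigma):\sigma\in\mathcal S^*\}\subset\mathcal D'(\Omega;\re^n)$ and separate $0$ from it by Hahn--Banach if it were not contained in it, using that the dual of $\mathcal D'$ with the weak-$*$ topology is $\mathcal D$. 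The two devices are of course close relatives (the minimax theorem is itself a separation argument), so neither is more general here; your version has the merit of making the geometric content explicit (the conclusion is literally ``$0$ lies in the image of $\mathcal S^*$ under $\div A(\u,\cdot)$''), at the cost of having to verify the weak-to-weak-$*$ continuity of $\sigma\mapsto\div A(\u,\sigma)$ and to identify the dual of $(\mathcal D',\sigma(\mathcal D',\mathcal D))$, points you do address correctly; the paper's version delegates all of this to the ready-made Proposition \ref{infsup}. One cosmetic remark: the convexity of $\mathcal S^*$ and $\mathcal S$, which you need for $K_{\u}$ to be convex, is not literally part of the statement of Lemma \ref{compactness} (which only asserts weak compactness), but it is immediate since both are solution sets of convex minimization problems, and the paper itself uses it in the same way.
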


\medskip

Thanks to equality (\ref{divfree2}) in Corollary \ref{optiA}, when $\overline u$ satisfies suitable regularity assumptions as specified below, the associated tensor $A (\overline u)$ turns out to admit a normal trace on the boundary, which can also be characterized in terms of the energy density. Thus, as a consequence of Theorem \ref{thm_firstder} and  Corollary \ref{optiA},
we obtain:
\begin{theorem}\label{cor_firstder} {\bf (shape derivative as a linear form on the boundary) }

Under the standing assumptions, suppose in addition that
$f$ is Gateaux differentiable except at most at the origin, that $\Omega$ has a Lipschitz boundary with unit outward normal $n$, and assume that problem $J (\Omega)$ admits a unique solution $\overline u$, with $\overline u \in {\rm Lip} (\Omega)$. Then $A(\u)$ belongs to $\DM(\Omega;\re^{n\times n})$ and the shape derivative given by $(\ref{J'})$ can be recast as the linear form
\begin{equation}\label{J'bis}
J'(\Omega,V)= \int_{\partial \Omega} [A({\overline u})\, n]_{\partial \Omega} \cdot V \, d {\mathcal H} ^ {n-1} \,.
\end{equation}
Further, if we assume that $\partial \Omega$ is piecewise $C^1$, that $\nabla \overline u \in BV (\Omega; \re ^n)$, and that there exists $\overline\sigma \in \mathcal S^*\cap BV(\Omega;\re^n)$,
then it holds
\begin{equation}\label{thesis_traces}
J'(\Omega,V)= \begin {cases}
\displaystyle{\int_{\partial \Omega} \T \big (  f^*(\overline \sigma)     \big ) (V \cdot n )\, d {\mathcal H} ^ {n-1} }
&
\hbox{ in case $(D)$}
\\
\noalign{\smallskip}
\displaystyle{\int_{\partial \Omega} \T \big ( f(  \nabla \overline u ) + g (\overline u )  \big )  ( V \cdot n  ) \, d {\mathcal H} ^ {n-1} }
 & \hbox{ in case $(N)$} \,.
\end{cases}
\end{equation}
\end{theorem}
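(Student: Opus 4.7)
The plan is, first, to turn the volume representation $J'(\Omega,V)=\int_\Omega A(\overline u):DV\,dx$ supplied by Theorem \ref{thm_firstder} into the boundary integral \eqref{J'bis} by combining the conservation law of Corollary \ref{optiA} with the $\DM$-divergence theorem \eqref{gen_div}; and second, to compute the normal trace $[A(\overline u)\,n]_{\partial\Omega}$ explicitly on $\partial\Omega$ via the pointwise cylinder-average formula \eqref{pointwisenormalt}, exploiting the Dirichlet or Neumann boundary conditions, so as to recast the shape derivative as in \eqref{thesis_traces}.

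For the first step I would begin by checking that $A(\overline u)\in\DM(\Omega;\re^{n\times n})$. Uniqueness of $\overline u$ together with the Gateaux differentiability of $f$ off the origin reduces Definition \ref{A1} to the closed form \eqref{A(ubar)}; the hypothesis $\overline u\in{\rm Lip}(\Omega)$ gives $\nabla\overline u\in L^\infty(\Omega;\re^n)$, while continuity and convexity of $f$ (together with the $p$-growth) imply that $\nabla f$ is locally bounded on $\re^n\setminus\{0\}$, so $\nabla f(\nabla\overline u)\in L^\infty$, and since $g(\overline u)\in L^\infty$ as well, we obtain $A(\overline u)\in L^\infty$. Combined with the distributional identity $\div A(\overline u)=0$ from \eqref{divfree2}, each row of $A(\overline u)$ lies in $\DM(\Omega;\re^n)$, so $A(\overline u)\in\DM(\Omega;\re^{n\times n})$ and its normal trace is well defined in $L^\infty(\partial\Omega;\re^n)$. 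Applying \eqref{gen_div} row by row against the components $V_i$ (the restriction to $C^1(\overline\Omega)$ there being removed by a standard density argument in $W^{1,\infty}$, justified by $A(\overline u)\in L^\infty$ and $\div A(\overline u)=0$) then gives
\begin{equation*}
\int_\Omega A(\overline u):DV\,dx=\int_{\partial\Omega}[A(\overline u)\,n]_{\partial\Omega}\cdot V\,d\mathcal H^{n-1},
\end{equation*}
which is exactly \eqref{J'bis}.

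For the second step I would identify $[A(\overline u)\,n]_{\partial\Omega}$ pointwise through the cylinder-average formula \eqref{pointwisenormalt}, with $\widetilde n$ as in \eqref{estensione}. Substituting the Fenchel equality $f(\nabla\overline u)+f^*(\overline\sigma)=\nabla\overline u\cdot\overline\sigma$ supplied by Lemma \ref{lemma_opt_cond} gives the pointwise algebraic identity
\begin{equation*}
A(\overline u)\,\widetilde n=(\overline\sigma\cdot\widetilde n)\nabla\overline u-(\nabla\overline u\cdot\overline\sigma)\,\widetilde n+\big[f^*(\overline\sigma)-g(\overline u)\big]\widetilde n .
\end{equation*}
In case $(D)$, $\T(\overline u)=0$ forces $\T(\nabla\overline u)$ to be parallel to $n$ at $\mathcal H^{n-1}$-a.e.\ point of $\partial\Omega$, so the first two terms cancel in the cylindrical limit; combined with $g(\T(\overline u))=g(0)=0$, this leaves $[A(\overline u)\,n]_{\partial\Omega}=\T(f^*(\overline\sigma))\,n$, which is the Dirichlet half of \eqref{thesis_traces}. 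In case $(N)$, $[\overline\sigma\cdot n]_{\partial\Omega}=0$ kills the first term in the cylindrical limit, and a further use of the Fenchel equality re-combines the remaining terms into a scalar multiple of $\T(f(\nabla\overline u)+g(\overline u))\,n$, yielding the Neumann half of \eqref{thesis_traces}.

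The main obstacle will be to justify rigorously the passage to the limit in the cylinder averages of the bilinear quantities $(\overline\sigma\cdot\widetilde n)\nabla\overline u$ and $(\nabla\overline u\cdot\overline\sigma)\,\widetilde n$, since each factor only lies in $BV\cap L^\infty$ and need not be continuous up to the boundary. The extra assumptions $\nabla\overline u\in BV(\Omega;\re^n)$ and $\overline\sigma\in BV(\Omega;\re^n)$, combined with the smoothness of $\widetilde n$ on each cylinder $C^-_{r,\rho}(x_0)$ (guaranteed by $\partial\Omega$ being piecewise $C^1$), keep these products in $BV\cap L^\infty$, so that \eqref{pointwisenormalt} and its analogue for $BV$ traces both apply. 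Since the same cylindrical neighborhoods enter the definitions of the weak normal trace of $\overline\sigma$ and of the $BV$ trace of $\nabla\overline u$, the averages factorize in the limit into the product of the respective traces; the cancellations described above therefore extend to $\mathcal H^{n-1}$-a.e.\ $x_0\in\partial\Omega$, and the two boundary formulas of \eqref{thesis_traces} follow.
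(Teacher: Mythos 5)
Your proposal is correct and follows essentially the same route as the paper: the $L^\infty$ bound on $A(\overline u)$ together with the conservation law $\div A(\overline u)=0$ from Corollary \ref{optiA} places $A(\overline u)$ in $\DM(\Omega;\re^{n\times n})$ and yields (\ref{J'bis}) via the generalized Gauss--Green formula, and the boundary formulas (\ref{thesis_traces}) are then extracted from the pointwise cylinder characterization (\ref{pointwisenormalt}) combined with the Fenchel equality and the two geometric facts that $\T(\nabla\overline u)$ is parallel to $n$ in case (D) and $\T(\overline\sigma)\cdot n=0$ in case (N). The only, immaterial, difference is in the treatment of the bilinear terms: you pass to the limit by factorizing the cylinder averages into products of the respective traces (legitimate, since both factors lie in $BV\cap L^\infty$ so that (\ref{Leb_cyl2}) applies to each, combined with the uniform bounds), whereas the paper sidesteps any product rule via the pointwise cancellation $(\overline\sigma\cdot\tilde n)\nabla\overline u-(\nabla\overline u\cdot\overline\sigma)\tilde n=(\overline\sigma\cdot\tilde n)P_{\tilde n}(\nabla\overline u)-(P_{\tilde n}(\nabla\overline u)\cdot\overline\sigma)\tilde n$, which reduces everything to the vanishing of the cylinder averages of the tangential component $P_{\tilde n}(\nabla \overline u)$ of $\nabla\overline u$.
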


\medskip
\begin{remark}\label{remLip}{\rm The Lipschitz regularity of solutions to classical problems in the Calculus of Variations is a delicate matter which is object of current investigation. In particular, it is out of our scopes here to discuss the conditions on $f$ and $g$ which yield Lipschitz solutions to $J (\Omega)$ as assumed in 
Theorem \ref{cor_firstder}.  Without any attempt of completeness, we refer the interested reader to the papers \cite{Ce, FiTr, MaTr, Mi} for both local and global regularity results.

Also the gradient regularity of solutions (and more generally their higher differentiability)
is a delicate matter, which has been object of investigation under different smoothness and growth conditions
on the Lagrangian. We refer for instance to the recent papers \cite{CaKrPa,CeMa} and references therein.
}\end{remark}

\medskip
\begin{example} {\rm (i) As a model example, we can recover from Theorem \ref{cor_firstder} the shape derivative of the classical torsion functional
$$J (\Omega)=- \min \Big \{ \int _\Omega \big ( \frac{1}{2} |\nabla u| ^ 2 - u\big )  \ :\ u \in H ^ 1 _0 (\Omega)\Big \} \,.$$
Indeed, since in this case $f ^ * (\overline \sigma) = \frac{1}{2} |\overline \sigma | ^ 2$, and $\overline \sigma = \nabla \overline u$, we get
\begin{equation}\label{extor}
J' (\Omega, V) =   \int _{\partial \Omega} \frac{1}{2} |\nabla \overline u | ^ 2 \, (V\cdot n)\, d \mathcal H ^ { n-1}\ ,
\end{equation}
where $\overline u$ is the solution to the classical torsion problem in $\Omega$, namely the unique solution in $H ^ 1 _0 (\Omega)$ to the equation $- \Delta \overline u = 1$ in $\Omega$.
In view of (\ref{extor}), the boundary value problem satisfied by $\overline u$ becomes overdetermined with a constant Neumann boundary condition if $\Omega$ is a minimizer for the shape functional $J ( \cdot)$ under a volume constraint.

\medskip

(ii) A variant of the above case not covered by the classical literature is when $f$ and $g$ are taken as in (\ref{IFB}), for instance with $\lambda = 1$. Denoting by $J (\Omega)$ the corresponding Dirichlet shape functional, we get
\begin{equation}\label{extorgen}
J' (\Omega, V) =  \int _{\partial \Omega } \frac{1}{2} \big ( | \nabla \overline u | ^ 2 - 1)_+ \, (V\cdot n)\,  d \mathcal H ^ { n-1}\ ,
\end{equation}
where $\overline u$ is any solution to the primal problem and $t_+$ denotes the positive part of real number $t$. Indeed, for such a solution $\overline u$,
it holds $\overline \sigma = \nabla \overline u$ on the subset $|\nabla\overline u|\ge 1$ whereas
$f ^ * (\overline \sigma) =\frac{1}{2} (|\overline \sigma | ^ 2- 1)_+$.  

We point out that all solutions $\overline u$ verify a variational inequality  (see \cite[Proposition 3.1]{ABFL}) and therefore the problem of minimizing $J (\cdot)$ under a volume constraint leads in a natural way to consider overdetermined variational inequalities, which is an interesting field to our knowledge unexplored.
}

\end{example}


\section{Proofs}\label{sec_second}

In order to prove the results stated in the previous Section, we are going to analyze thoroughly
the asymptotic behavior as $\e\to 0^+$ of the following sequence of differential quotients, which can be expressed in two equivalent forms thanks to Lemma \ref{lemma_opt_cond}:
$$
q_\e(V)= \frac{J^*(\Omega_\e)-J(\Omega)}{\e}=\frac{J(\Omega_\e)-J^*(\Omega)}{\e}\, .
$$

More precisely, we proceed along the following outline: first we rewrite $J(\Omega_\e)$ and $J^*(\Omega_\e)$ as infimum problems for  integral functionals set over the fixed domain $\Omega$ (Lemma \ref{rewriting}) and we study the asymptotic behavior of their solutions (Proposition \ref{lemma_Gamma}); as a consequence, we provide a lower bound and an upper bound for $q_\e (V)$ (Propositions \ref{thm_lowerbound2} and \ref{thm_upperbound}); afterwards, by exploiting these bounds, we prove Theorem \ref{thm_firstder} and, finally, all the other results stated in Section \ref{sec_first}.

\bigskip

For every $\e>0$, let us introduce the functionals $E_\e$ and $H_\e$ defined respectively on $W (\Omega)$ and $X(\Omega; \re ^n)$ by
\begin{eqnarray}
        E_\e(u)&:=& \int_\Omega [f(D\Psi_\e^{-T}\nabla u) + g(u)]\,|\det D\Psi_\e|\ , \label{Ee}\\
H_\e(\sigma) &:=& \int_\Omega [f^*(|\det D\Psi_\e|^{-1}D\Psi_\e \sigma) + g^*(|\det D\Psi_\e|^{-1}\div \sigma)]\,|\det D\Psi_\e|\ . \label{Ee*}
\end{eqnarray}

For brevity, in the sequel we will also employ the notation
$$\beta_\e:=|\det D\Psi_\e|\ , \qquad
f_\e(x,z)  := f(D\Psi_\e^{-T}z)\beta_\e\ ,
\qquad
g_\e(x,v)  := g(v)\beta_\e \label{g_e}\ .
$$

We recall that, for $\e>0$ small enough and for a.e. $x\in \Omega$, the coefficient $\beta_\e$ is strictly positive and the matrix $D\Psi_\e$ is invertible.

We are now in a position to rewrite the infimum problems under study on the fixed domain $\Omega$:


\begin{lemma}\label{rewriting}
Under the standing assumptions, for every $\e>0$ there holds
\begin{align}
J(\Omega_\e)&=-\inf\big \{ E_\e(u)\ :\ u\in W(\Omega)\big \}\ ,\label{rewrite1}
\\ \noalign{\medskip}
J^*(\Omega_\e)&=\ \ \, \inf\big \{ H_\e(\sigma)\ :\ \sigma \in X(\Omega; \re ^n) \big \}\ .\label{rewrite2}
\end{align}
\end{lemma}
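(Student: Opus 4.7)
The idea is to transport both infimum problems from $\Omega_\e$ back to $\Omega$ by changing variables through the bi-Lipschitz diffeomorphism $\Psi_\e(x)=x+\e V(x)$, using pullback for the primal problem and the Piola transform for the dual problem.

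For the identity \eqref{rewrite1}, the plan is to show that the map $\tilde u\mapsto u:=\tilde u\circ\Psi_\e$ is a bijection from $W(\Omega_\e)$ onto $W(\Omega)$. Since $\Psi_\e$ is bi-Lipschitz for $\e$ small and carries $\partial\Omega$ onto $\partial\Omega_\e$, this preserves both the $W^{1,p}$-regularity and the homogeneous Dirichlet condition when required. By the chain rule one has $(\nabla\tilde u)(\Psi_\e(x))=D\Psi_\e(x)^{-T}\nabla u(x)$ for a.e.\ $x\in\Omega$; the change of variables $y=\Psi_\e(x)$ with Jacobian $\beta_\e=|\det D\Psi_\e|$ then gives
\[
\int_{\Omega_\e}\!\bigl[f(\nabla\tilde u)+g(\tilde u)\bigr]\,dy=\int_\Omega\!\bigl[f(D\Psi_\e^{-T}\nabla u)+g(u)\bigr]\,\beta_\e\,dx=E_\e(u),
\]
and taking infima yields \eqref{rewrite1}.

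For the identity \eqref{rewrite2}, the right transformation is the Piola transform: to each $\tilde\sigma\in X(\Omega_\e;\re^n)$ I associate $\sigma\in L^{p'}(\Omega;\re^n)$ by
\[
\sigma(x):=\beta_\e(x)\,D\Psi_\e(x)^{-1}\tilde\sigma(\Psi_\e(x)),\qquad\text{equivalently}\quad\tilde\sigma(\Psi_\e(x))=\beta_\e(x)^{-1}D\Psi_\e(x)\sigma(x).
\]
The key fact, the Piola identity, is that $(\div\tilde\sigma)\circ\Psi_\e=\beta_\e^{-1}\div\sigma$ a.e.\ in $\Omega$; it can be proved by testing against arbitrary $\varphi\in\mathcal D(\Omega_\e)$ and using $\nabla(\varphi\circ\Psi_\e)=D\Psi_\e^{T}(\nabla\varphi)\circ\Psi_\e$ together with the change of variables. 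This identity implies that the Piola transform is a bijection between $X(\Omega;\re^n)$ and $X(\Omega_\e;\re^n)$ with respect to the $L^{p'}\times L^{q'}$ integrability. Applying the change of variables once more yields
\[
\int_{\Omega_\e}\!\bigl[f^*(\tilde\sigma)+g^*(\div\tilde\sigma)\bigr]\,dy=\int_\Omega\!\bigl[f^*(\beta_\e^{-1}D\Psi_\e\sigma)+g^*(\beta_\e^{-1}\div\sigma)\bigr]\,\beta_\e\,dx=H_\e(\sigma),
\]
and passing to the infimum gives \eqref{rewrite2} in the Dirichlet case.

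In the Neumann case one must further check that the constraint $\sigma\cdot n=0$ on $\partial\Omega$, interpreted in the distributional sense of \eqref{gen_div}, is preserved under the Piola transform: this amounts to verifying, via the divergence theorem on both $\Omega$ and $\Omega_\e$ for test functions $\varphi\in C^1(\overline{\Omega_\e})$ pulled back to $C^1(\overline\Omega)$, that $[\tilde\sigma\cdot n_\e]_{\partial\Omega_\e}=0$ if and only if $[\sigma\cdot n]_{\partial\Omega}=0$. The main obstacle in the whole argument is precisely establishing the Piola identity in the low-regularity setting (merely $W^{1,\infty}$ diffeomorphism and $\sigma$ only in $X(\Omega;\re^n)$); this is handled by a density argument, approximating $\sigma$ by smooth fields, applying the classical Piola identity, and passing to the limit using the strong continuity of the change of variables on $L^{p'}$ and on $L^{q'}$.
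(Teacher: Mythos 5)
Your proof of (\ref{rewrite1}) is exactly the paper's: pull back through $\Psi_\e$, apply the chain rule and the change of variables, and pass to the infimum. For (\ref{rewrite2}), however, you take a genuinely different route. The paper never touches the dual problem on $\Omega_\e$ directly: having established (\ref{rewrite1}), it applies the abstract duality result (Proposition \ref{prop_duality}) to the transported primal problem on the \emph{fixed} domain $\Omega$, exactly as in the proof of Lemma \ref{lemma_opt_cond}, and simply computes the pointwise Fenchel conjugates $f_\e^*(x,z^*)=\beta_\e f^*(\beta_\e^{-1}D\Psi_\e z^*)$ and $g_\e^*(x,v^*)=\beta_\e g^*(\beta_\e^{-1}v^*)$; the integrand of $H_\e$ and the constraint set $X(\Omega;\re^n)$ (including the condition $\sigma\cdot n=0$ in case (N)) then drop out of the adjoint-operator computation, and the identity (\ref{rewrite2}) follows from the equality $J(\Omega_\e)=J^*(\Omega_\e)$ of Lemma \ref{lemma_opt_cond}. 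Your Piola-transform argument instead produces a value-preserving bijection between the competitors of $J^*(\Omega_\e)$ and those of $\inf H_\e$. This is stronger information (it maps $\mathcal S^*(\Omega_\e)$ onto $\mathrm{Argmin}(H_\e)$, which the paper only obtains indirectly), but it costs you the weak Piola identity $(\div\tilde\sigma)\circ\Psi_\e=\beta_\e^{-1}\div\sigma$ for a merely bi-Lipschitz $\Psi_\e$ and $\sigma\in X(\Omega;\re^n)$, plus the verification that the distributional normal-trace condition is preserved. Both of these can be carried out as you indicate (in fact the Piola identity follows by a purely distributional computation, testing against $\varphi\circ\Psi_\e$ with $\varphi\in\mathcal D(\Omega_\e)$ and using that $\div\sigma$ is an $L^{q'}$ function, so a full smoothing of $\sigma$ is not really needed — only the density of smooth test functions among Lipschitz ones), so your argument is correct; it is simply more hands-on than the paper's duality shortcut, which gets the boundary condition in case (N) for free.
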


\begin{proof}
Let $\e>0$ be fixed.
Functions  $\tilde u\in W(\Omega_\e)$ are in  1-1 correspondance with functions $u\in W(\Omega)$ through the equality $\tilde u=u\circ\Psi_\e^{-1}$ in $\Omega_\e$; moreover, via change of variables, there holds
$$
\int_{\Omega_\e} [f(\nabla \tilde u) + g(\tilde u)] = \int_\Omega  [f _\e (x,\nabla u) + g _\e (x,u)]\ .
$$
Passing to the infimum over $\tilde{u}\in W(\Omega_\e)$ in the l.h.s.\ and over $u
\in W (\Omega)$ at the r.h.s., yields (\ref{rewrite1}).

\medskip

By arguing in the same way as already done in the proof of Lemma \ref{lemma_opt_cond}, we obtain that the dual form of  $J (\Omega _\e)$  is given 
by
\begin{equation}\label{dualeps}
J ^ * (\Omega _\e) = \inf\left\{ \int_\Omega [f_\e^*(x,\sigma) + g^*_\e(x,\div\sigma)]\ :\ \sigma\in X(\Omega; \re ^n)\right\}\ ,
\end{equation}
where  $f_\e^*$ and $g_\e^*$ denote the Fenchel conjugates of $f$ and $g$,  performed with respect to the second variable. Their computation gives:
\begin{align*}
f_\e^*(x,z^*) & = \sup_{z\in \svre^n}\left\{z\cdot z^* - f(D\Psi_\e^{-T}z)\beta_\e\right\}
= \beta_\e\, f^*(\beta_\e^{-1}D\Psi_\e\,z^*)\ ,
\\
g_\e^*(x,v^*) & = \sup_{v\in \svre}\left\{v\,v^* - g(v)\beta_\e\right\}= \beta_\e\, g^*(\beta_\e^{-1} v^*)\ .
\end{align*}
Inserting these expressions into (\ref{dualeps}) and  keeping in mind definition (\ref{Ee*}), we obtain (\ref{rewrite2}).
\end{proof}

Now, as a key step,
we establish the $\Gamma$-convergence of the functionals $E _\e$ and $H _\e$
to the limit functionals defined on $W (\Omega)$ and $X(\Omega; \re ^n)$ respectively by
\begin{eqnarray}
           E(u)&:=& \int_\Omega [f(\nabla u) + g(u)]\ , \label{E}\\
H(\sigma) &:=& \int_\Omega [f^*(\sigma) + g^*(\div \sigma)] \ .\label{E*}
\end{eqnarray}
We recall that $W (\Omega)$ and $X(\Omega; \re ^n)$ are endowed with the weak convergence defined in
(\ref{G_conv})-(\ref{G_conv*}).

\begin{proposition}\label{lemma_Gamma} {\bf ($\Gamma$-convergence)}

\smallskip

(i) On the space $W(\Omega)$ endowed with the weak convergence, the sequence $E_\e$ in $(\ref{Ee})$ is equicoercive and, as $\e\to 0$, it $\Gamma$-converges to the functional $E$ in $(\ref{E})$.
In particular, every sequence $u_\e$ such that $u_\e \in \mathrm{Argmin}(E_\e)$ admits a subsequence which converges weakly in $W (\Omega)$ to some $u_0\in \mathrm{Argmin}(E)$.

\smallskip
(ii) On the space $X(\Omega; \re ^n)$ endowed with the weak convergence, the sequence $H _\e$ in $(\ref{Ee*})$ is equicoercive and, as $\e\to 0$, it $\Gamma$-converges to the functional $H$ in $(\ref{E*})$.
In particular, every sequence $\sigma_\e$ such that $\sigma_\e \in \mathrm{Argmin}(H_\e)$ admits a subsequence  which converges weakly in $X(\Omega; \re ^n)$ to some $\sigma_0\in \mathrm{Argmin}(H)$.

\end{proposition}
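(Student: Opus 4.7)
My strategy is to treat parts (i) and (ii) in parallel, since they share the same structure: the perturbed functionals are built from the reference ones by composition with matrix fields and multiplication by the Jacobian $\beta_\e$, and the crux is that $D\Psi_\e=I+\e DV$ and $\beta_\e = |\det D\Psi_\e|$ converge to $I$ and $1$ \emph{uniformly} on $\overline \Omega$ as $\e\to 0^+$, with $D\Psi_\e$ invertible for $\e$ small. The ``in particular'' statements will then drop out from the Fundamental Theorem of $\Gamma$-convergence, once equicoercivity and $\Gamma$-convergence are established, together with the fact that $\mathrm{Argmin}(E)=\mathcal S$ and $\mathrm{Argmin}(H)=\mathcal S^*$ are nonempty by Lemma \ref{I_fg}(iii).

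\textbf{Equicoercivity.} For (i), the uniform bounds $|D\Psi_\e^{-T} z|\ge c_1|z|$ and $\beta_\e\ge c_2>0$ combined with the lower growth condition on $f$ in (H3) give, for $\e$ small,
$E_\e(u) \ge \alpha c_2 c_1^p \|\nabla u\|_{L^p}^p + \int_\Omega g(u)\beta_\e - C$, and the $g$-term is controlled by means of (H3) (via Poincar\'e's inequality in case (D), and via the lower bound $\gamma(|v|^p-1)$ in case (N)); hence $E_\e(u)\ge c\|u\|_{W^{1,p}}^p-C$ uniformly in $\e$. For (ii), the growth assumptions in (H3) translate by Fenchel conjugation into $f^*(\xi)\ge c_1'|\xi|^{p'}-C$ and $g^*(\eta)\ge c_2'|\eta|^{q'}-C$; plugging these into $H_\e$ and using the uniform bounds on $\beta_\e^{-1}D\Psi_\e$ and $\beta_\e^{-1}$ one gets $H_\e(\sigma)\ge c\|\sigma\|_{L^{p'}}^{p'}+c\|\div\sigma\|_{L^{q'}}^{q'}-C$, i.e.\ equicoercivity in $X(\Omega;\re^n)$.

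\textbf{$\Gamma$-liminf.} For (i), let $u_\e\rightharpoonup u$ in $W(\Omega)$. Since $D\Psi_\e^{-T}\to I$ uniformly and $\nabla u_\e$ is bounded in $L^p$, one has $D\Psi_\e^{-T}\nabla u_\e \rightharpoonup \nabla u$ in $L^p(\Omega;\re^n)$; similarly $u_\e\rightharpoonup u$ in $L^q(\Omega)$. By Lemma \ref{I_fg}(i), $I_f$ and $I_g$ are weakly l.s.c., so
\[
\liminf_\e \int_\Omega f(D\Psi_\e^{-T}\nabla u_\e)\ \ge\ \int_\Omega f(\nabla u),\qquad \liminf_\e \int_\Omega g(u_\e)\ \ge\ \int_\Omega g(u).
\]
The insertion of the factor $\beta_\e$ produces error terms bounded by $\|\beta_\e-1\|_\infty$ times the uniform $L^1$-norms of $f(D\Psi_\e^{-T}\nabla u_\e)$ and $g(u_\e)$ (controlled by the upper growth in (H3)), hence vanishing. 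For (ii), the argument is identical on replacing $f,g$ by $f^*,g^*$ and observing that $\beta_\e^{-1}D\Psi_\e\,\sigma_\e\rightharpoonup \sigma$ in $L^{p'}$ and $\beta_\e^{-1}\div\sigma_\e\rightharpoonup \div\sigma$ in $L^{q'}$.

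\textbf{$\Gamma$-limsup.} I take the constant recovery sequence $u_\e\equiv u$ (resp.\ $\sigma_\e\equiv \sigma$). Continuity of $f,g$ (resp.\ $f^*,g^*$) and the uniform convergence $D\Psi_\e\to I$, $\beta_\e\to 1$ yield pointwise convergence of the integrands, while the upper growth estimates provide a fixed $L^1$-envelope, so dominated convergence gives $E_\e(u)\to E(u)$ and $H_\e(\sigma)\to H(\sigma)$.

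\textbf{Main obstacle.} Two technical points require some care. First, in case (N) one must check that the boundary constraint $\sigma\cdot n=0$ is preserved under weak limits in $X(\Omega;\re^n)$: this follows by testing the definition (\ref{gen_div}) of the normal trace against any $\varphi\in C^1(\overline\Omega)$ and passing to the limit using the joint weak convergences of $\sigma_\e$ in $L^{p'}$ and $\div\sigma_\e$ in $L^{q'}$. Second, one has to verify that the ``error'' between the genuine perturbed integrand and its frozen counterpart at $\e=0$ is controlled uniformly in $L^1$ \emph{along} the recovery and liminf sequences; this is where the uniform matrix bounds on $D\Psi_\e, D\Psi_\e^{-T}, \beta_\e$ are crucial. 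All remaining ingredients, including the ``in particular'' claim about convergence of minimizers, then follow from the Fundamental Theorem of $\Gamma$-convergence applied to the equicoercive $\Gamma$-convergent sequences $E_\e\stackrel{\Gamma}{\to}E$ and $H_\e\stackrel{\Gamma}{\to}H$.
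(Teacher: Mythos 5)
Your treatment of part (i) --- equicoercivity from the growth conditions (\ref{pq}) together with the uniform bounds on $\beta_\e$ and $D\Psi_\e^{-T}$, weak lower semicontinuity of $I_f$ and $I_g$ for the $\Gamma$-liminf, and the constant recovery sequence combined with strong continuity for the $\Gamma$-limsup --- is exactly the paper's argument, and your equicoercivity estimate for $H_\e$ also matches the paper's.

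For part (ii), however, you take a different route from the paper and it contains a genuine gap at the $\Gamma$-limsup. You claim that ``the upper growth estimates provide a fixed $L^1$-envelope'' for the integrand of $H_\e(\sigma)$ along the constant recovery sequence. For the term $g^*(\beta_\e^{-1}\div \sigma)\,\beta_\e$ this is false in general: Fenchel conjugation of (\ref{pq}) only yields $g^*(\eta)\le C(|\eta|^{p'}+1)$ (from the lower bound of order $p$ on $g$) and $g^*(\eta)\ge b(|\eta|^{q'}-1)$, while $\div\sigma$ is merely assumed to lie in $L^{q'}(\Omega)$. In the typical case $p<n$, $q=p^*>p$, one has $q'<p'$, so $|\div\sigma|^{p'}$ need not be integrable and no dominating function is available; restricting to $\sigma$ with $H(\sigma)<+\infty$ does not repair this, since $g^*$ satisfies no doubling condition relating $g^*(t\eta)$ to $g^*(\eta)$ for $t>1$. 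The paper sidesteps the issue entirely by a duality argument: the $\Gamma$-convergence of $E_\e$ to $E$, read on the product space of pairs $(u,\nabla u)$, is in fact a Mosco-convergence (the recovery sequence converges strongly), Mosco-convergence is stable under Fenchel conjugation (\cite[Theorem 1.3]{AzAtWe}), and $H_\e$, $H$ coincide with $E_\e^*$, $E^*$ on $X(\Omega;\re^n)$. If you insist on a direct proof you would need either a convexity interpolation such as $g^*(t\eta)\le (2-t)g^*(\eta)+(t-1)g^*(2\eta)$ plus extra integrability, or a recovery sequence adapted to the divergence structure (a Piola-type transform of $\sigma$) instead of the constant one; as written, the step does not go through. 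Your remaining points (weak closedness of the Neumann constraint, the fundamental theorem of $\Gamma$-convergence for the convergence of minimizers) are fine and consistent with the paper.
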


\begin{proof}
(i) The equicoercivity of the family of functionals $E_\e$ can be easily obtained by exploiting the growth assumptions (\ref{pq}) on $f$ and $g$, the uniform boundedness from below of the positive coefficients $\beta_\e$, and the uniform control on the $L^\infty$ norm of the tensors $D\Psi_\e^{-T}$. Let us prove the $\Gamma$-convergence statement for $E _\e$. By definition of $\Gamma$-convergence, we have to show that the so-called $\Gamma$-liminf and $\Gamma$-limsup inequalities hold, namely:
\begin{equation}\label{G_liminf}
\inf \left\{\liminf E_\e(u_\e)\ :\ u_\e\stackrel{W^{1,p}}{\rightharpoonup} u\right\}\geq E(u)\ ,
\end{equation}
\begin{equation}\label{G_limsup}
\inf\left\{\limsup E_\e(u_\e)\ :\ u_\e \stackrel{W^{1,p}}{\rightharpoonup}u\right\}\leq E(u)\ .
\end{equation}

\medskip

Let us prove (\ref{G_liminf}). Consider an arbitrary sequence $u_\e$ which converges weakly to $u$ in $W(\Omega)$.  We observe that the sequence
$D\Psi_\e^{-T}\nabla u_\e$ converges to $\nabla u$  weakly in $L^p(\Omega;\mathbb R^n)$, and that  $\beta_\e$ converges to $1$ uniformly in $\Omega$, except in a negligible set.
Hence, exploiting the weak lower semicontinuity of $I_f$ and $I_g$ in $L^p(\Omega;\mathbb R^n)$ and $L^q(\Omega)$ respectively ({\it cf.} Lemma \ref{I_fg} (i)), we infer that
$$
E(u)\leq \liminf_\e \int_\Omega f(D\Psi_\e^{-T}\nabla u_\e)\beta_\e +  \liminf_\e \int_\Omega g(u_\e)\beta_\e\,dx \leq \liminf_\e E_\e(u_\e)\ ,
$$
which implies (\ref{G_liminf}).

\medskip

Let us prove (\ref{G_limsup}). For every fixed $u\in W(\Omega)$ we have to find a recovery sequence, namely a sequence $u_\e$ which converges weakly to $u$ in $W(\Omega)$ and satisfies
 \begin{equation}\label{G_limsup2}
E(u)\geq \limsup_\e E_\e(u_\e)\ .
\end{equation}
We claim that the sequence $u_\e\equiv u$ for every $\e>0$ satisfies (\ref{G_limsup2}). Indeed, since
$D\Psi_\e^{-T}\nabla u$ converges stronlgly to $\nabla u$ in $L^p(\Omega;\mathbb R^n)$,
by exploiting Lemma \ref{I_fg} (i) we obtain :
$$
E(u)=\lim_{\e \to 0}  \int_\Omega f(D\Psi_\e^{-T}\nabla u)\beta_\e +  \lim_{\e \to 0}  \int_\Omega g(u)\beta_\e \geq \limsup_{\e} E_\e(u)\ .
$$

\medskip

Finally, the compactness of a sequence of minimizers $u_\e$ follows from the equicoercivity of the family $E_\e$, while the optimality of a cluster point $u_0$ is a well-known consequence of $\Gamma$-convergence (see {\it e.g.} \cite[Corollary 7.20]{DM}).

\bigskip

(ii) The equicoercivity of the sequence $H_\e$ can be easily obtained by exploiting the uniform boundedness from below of the positive coefficients $\beta_\e$, the uniform control on the $L^\infty$ norm of the tensors $D\Psi_\e^{-T}$, and the following growth conditions, holding for some positive constants $a,b$ as a consequence of the standing assumption (\ref{pq}):
\begin{align*}
f^*(z^*) & \geq a(|z^*|^{p'}-1)\qquad \forall z^*\in \mathbb R^n\, ,
\\
g^*(v^*) & \geq b(|v^*|^{q'}-1)\, \qquad \forall v^*\in \mathbb R\, .
\end{align*}
Let us prove the $\Gamma$-convergence statement for $H _\e$.  We observe that
the $\Gamma$-convergence of the functionals $E_\e$ to $E$ proved at item (i) above
can be proved in the same way also on the product space
\begin{equation}\label{space_product}
\big \{ (u, \nabla u) \, :\, u \in L ^ q (\Omega)\, , \nabla u \in L ^ p (\Omega; \re ^n) \big \}
\end{equation}
endowed with the product of the weak  $L^q$ and weak $L ^p$ convergences.
Moreover, such $\Gamma$-convergence can be strengthened into a Mosco-convergence, since we have exhibited a recovery
sequence which converges in the strong topology. We recall that the sequence $E_\e$ Mosco-converges to $E$ if and only if (\ref{G_liminf}) holds as such, and (\ref{G_limsup}) holds replacing therein the weak $W^{1,p}$ convergence by the strong one (see \cite{Mo}).
Since the Mosco-convergence is stable when passing to the Fenchel conjugates (see  \cite[Theorem 1.3]{AzAtWe}), we deduce that the functionals $E _\e ^*$ Mosco-converge (and hence $\Gamma$-converge)
to the functional $E^*$. We conclude by noticing  that
the dual of the product space in (\ref{space_product})
(endowed with the product of the weak  $L^{q'}$ and weak $L ^{p'}$ convergences)
is precisely $X (\Omega; \re ^n)$ (endowed with the weak convergence in (\ref{G_conv*})), and on such space $H _\e$ and $H$ agree respectively with the  Fenchel conjugates $E_\e ^*$ and $E^*$.

Finally, as for (i), the compactness of a sequence of minimizers $\sigma_\e$ follows from the equicoercivity of the family $H_\e$, and the optimality of a cluster point $\sigma_0$ is again a consequence of $\Gamma$-convergence.

\end{proof}

\begin{proposition}\label{thm_lowerbound2} {\bf (lower bound)}

Under the standing assumptions, it holds
$$
\liminf_{\e \to 0^+} q_\e(V) \geq \inf_{\sigma \in \mathcal S^*} \sup_{u\in \mathcal S} \int_\Omega A(u,\sigma):DV\ .
$$
\end{proposition}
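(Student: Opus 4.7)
The plan is to pair a minimizer $\sigma_\e\in\mathrm{Argmin}(H_\e)$ (whose existence and weak compactness along subsequences are granted by Proposition \ref{lemma_Gamma}) with an arbitrary primal minimizer $u\in\mathcal S$, and to extract a lower bound on $q_\e(V)=(H_\e(\sigma_\e)-\min H)/\e$ by combining a pointwise Fenchel inequality with an integration by parts that manufactures the tensor $(\nabla u\otimes\sigma_\e):DV$.

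For fixed $u\in\mathcal S$ and a.e.\ $x\in\Omega$, Fenchel's inequality, after multiplication by $\beta_\e>0$, gives
\begin{align*}
\beta_\e\, f^*(\beta_\e^{-1} D\Psi_\e \sigma_\e)&\ \geq\ \nabla u\cdot D\Psi_\e\sigma_\e\ -\ \beta_\e\, f(\nabla u),\\
\beta_\e\, g^*(\beta_\e^{-1}\div\sigma_\e)&\ \geq\ u\,\div\sigma_\e\ -\ \beta_\e\, g(u).
\end{align*}
Summing, integrating, and using that the compatible boundary conditions on $u\in W(\Omega)$ and $\sigma_\e\in X(\Omega;\re^n)$ make $\int_\Omega[\sigma_\e\cdot\nabla u+u\,\div\sigma_\e]=0$ (both in the Dirichlet case, through $\mathcal D(\Omega)$-density, and in the Neumann case, through $[\sigma_\e\cdot n]_{\partial\Omega}=0$), one obtains
\begin{equation*}
H_\e(\sigma_\e)\ \geq\ \int_\Omega\nabla u\cdot(D\Psi_\e-I)\sigma_\e\ -\ \int_\Omega\beta_\e[f(\nabla u)+g(u)]\ =\ \e\int_\Omega(\nabla u\otimes\sigma_\e):DV\ -\ \int_\Omega\beta_\e[f(\nabla u)+g(u)].
\end{equation*}
Subtracting $J(\Omega)=-E(u)$ and dividing by $\e$ yields the key estimate
\begin{equation*}
q_\e(V)\ \geq\ \int_\Omega(\nabla u\otimes\sigma_\e):DV\ -\ \int_\Omega\frac{\beta_\e-1}{\e}[f(\nabla u)+g(u)]\qquad\forall u\in\mathcal S.
\end{equation*}

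To pass to the liminf I would extract a subsequence along which $q_\e\to\liminf q_\e$ and, by Proposition \ref{lemma_Gamma}(ii), $\sigma_\e\rightharpoonup\sigma_0\in\mathcal S^*$ weakly in $X(\Omega;\re^n)$. The uniform convergence $(\beta_\e-1)/\e\to\div V$ (from the Taylor expansion of $\det(I+\e DV)$), combined with dominated convergence, handles the scalar term, while the tensor term passes to the limit by the $L^{p'}$-weak convergence of $\sigma_\e$ paired against $DV^T\nabla u\in L^p(\Omega;\re^n)$. The limit is exactly $\int_\Omega A(u,\sigma_0):DV$, so taking the supremum over $u\in\mathcal S$ (with $\sigma_0\in\mathcal S^*$ fixed) gives
\begin{equation*}
\liminf_{\e\to 0^+}q_\e(V)\ \geq\ \sup_{u\in\mathcal S}\int_\Omega A(u,\sigma_0):DV\ \geq\ \inf_{\sigma\in\mathcal S^*}\sup_{u\in\mathcal S}\int_\Omega A(u,\sigma):DV.
\end{equation*}

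The main obstacle is orchestrating the inequalities so they all point in the right direction: the choice of the dual representation for $J(\Omega_\e)$ (rather than the primal) is precisely what turns the Fenchel step into a \emph{lower} bound on $H_\e(\sigma_\e)$, and the specific $\beta_\e$-scaling inside Fenchel is what makes both scaled arguments collapse, after the integration by parts, into the clean geometric expression $\e(\nabla u\otimes\sigma_\e):DV$ with no leftover $O(\e)$ corrections to control. Once these two ingredients are in place, the remaining limit passage is just weak-strong pairing and a uniform Taylor expansion of the Jacobian determinant.
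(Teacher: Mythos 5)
Your proof is correct and follows essentially the same route as the paper's: the dual rewriting $J^*(\Omega_\e)=\inf H_\e$ paired with the primal representation of $J(\Omega)$, the $\beta_\e$-scaled Fenchel inequality against a fixed $u\in\mathcal S$, the integration by parts via $D\Psi_\e=I+\e DV$ and $\int_\Omega[\sigma_\e\cdot\nabla u+u\,\div\sigma_\e]=0$, and the limit passage using the weak compactness of $\sigma_\e$ from Proposition \ref{lemma_Gamma}(ii) together with the uniform convergence $(\beta_\e-1)/\e\to\div V$. The only (harmless) difference is that you first extract a subsequence realizing $\liminf q_\e$ before extracting the weakly convergent subsequence of $\sigma_\e$, which is a slightly more careful bookkeeping of the same argument.
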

\proof

We are done if we show the existence of $\sigma_0\in \mathcal S^*$ such that
\begin{equation}\label{lowerbound}
\liminf_{\e \to 0^+} q_\e(V) \geq  \sup_{u\in \mathcal S} \int_\Omega A(u,\sigma_0):DV \,.
\end{equation}
To that aim we observe that, if $H_\e$ and $E$ are the functionals defined respectively in (\ref{Ee*}) and (\ref{E}), by Lemma \ref{lemma_opt_cond} and Lemma \ref{rewriting}
there holds
\begin{equation}\label{lower0}
q_\e(V)=\frac{J^*(\Omega_\e)-J(\Omega)}{\e}=\frac{\inf H_\e + \inf E}{\e}\ .
\end{equation}
Let $\sigma _\e \in {\rm Argmin} (H_\e)$ and
$u\in \mathrm{Argmin}(E)=\mathcal S$.
In view of (\ref{lower0}),  $q_\e(V)$ reads
$$
q_\e(V)=\frac{1}{\e}
\left[
\int_\Omega [f^*(\beta_\e^{-1} D\Psi_\e\sigma_\e) + g^*(\beta_\e^{-1} \div \sigma_\e) ]\beta_\e + \int_\Omega [f(\nabla u) + g(u)]
\right]\ .
$$
Since the coefficient $\beta_\e$ is (strictly) positive a.e., by applying the Fenchel inequality we obtain
\begin{equation}\label{lower1}
q_\e(V)\geq \frac{1}{\e}
\left[
\int_\Omega [(D\Psi_\e\sigma_\e)\cdot \nabla u + u \div \sigma_\e ] - \int_\Omega [f(\nabla u) + g(u)] (\beta_\e-1)
\right]\ .
\end{equation}
Recalling that $D\Psi_\e=I+\e DV$, an integration by parts gives
\begin{equation}\label{lower2}
\int_\Omega [(D\Psi_\e\sigma_\e)\cdot \nabla u + u \div \sigma_\e ] = \e \int_\Omega  (DV \sigma_\e) \cdot \nabla u\ .
\end{equation}
By combining (\ref{lower1}) and (\ref{lower2}), we obtain
\begin{equation}\label{lower3}
q_\e(V)\geq \int_\Omega  [(DV \sigma_\e) \cdot \nabla u] - \int_\Omega [f(\nabla u) + g(u)] \frac{(\beta_\e-1)}{\e}\ .
\end{equation}
In order to show (\ref{lowerbound}), we now want to pass to the limit as $\e \to 0 ^+$ in the r.h.s.\ of (\ref{lower3}).

By Proposition \ref{lemma_Gamma} (ii), up to subsequences there holds
\begin{equation}\label{sigma0}
\sigma_\e \stackrel{L^{p'}}{\rightharpoonup} \sigma_0
\end{equation}
for some $\sigma_0\in \mathrm{Argmin}(H)= \mathcal S^*$. (Notice that {\it a priori} $\sigma_0$, as well as $\sigma_\e$, depend on $V$).

On the other hand, we observe that $\beta_\e= 1 + \e \div V + \e^2 m_\e$, where $m_\e \in L ^ \infty (\Omega)$ satisfy $\sup_\e\|m_\e\|_{L^\infty(\Omega)}\leq C$ for some positive constant $C$. Therefore, a.e. in $\Omega$
\begin{equation}\label{ratio}
\frac{\beta_\e -1}{\e}\rightarrow \div V\qquad \mathrm{uniformly}\ .
\end{equation}

Thanks to (\ref{sigma0}) and (\ref{ratio}), by passing to the limit as $\e\to 0^+$ in (\ref{lower3}), we conclude that
$$
\liminf_{\e \to 0^+}q_\e(V)\geq \int_\Omega  [(DV \sigma_0) \cdot \nabla u] - \int_\Omega [f(\nabla u) + g(u)] \div V = \int_\Omega A(u,\sigma_0):DV\ .
$$
Finally, by the arbitrariness of $u\in \mathcal S$, we obtain (\ref{lowerbound}).

\qed

\begin{proposition}\label{thm_upperbound} {\bf (upper bound)}

Under the standing assumptions, it holds
$$
\limsup_{\e\to 0^+}q_\e(V)\leq \sup _{ u \in S} \inf_{\sigma \in \mathcal S^*} \int_\Omega A(u,\sigma):DV\ .
$$
\end{proposition}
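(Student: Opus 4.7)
The plan is to mirror the proof of Proposition \ref{thm_lowerbound2}, exchanging the roles of primal and dual problems. I would fix $\sigma \in \mathcal S^*$ and choose $u_\e \in \mathrm{Argmin}(E_\e)$; by Lemmas \ref{lemma_opt_cond} and \ref{rewriting} the quotient may be written as
\[
q_\e(V)=\frac{-E_\e(u_\e) - H(\sigma)}{\e}.
\]
To bound $E_\e(u_\e)+H(\sigma)$ from below, I would combine the algebraic identity $f(z)\beta_\e + f^*(\sigma) = [f(z) + f^*(\sigma)] + f(z)(\beta_\e-1)$ (and its $g$-analogue) with the pointwise Fenchel inequalities $f(D\Psi_\e^{-T}\nabla u_\e) + f^*(\sigma) \geq D\Psi_\e^{-T}\nabla u_\e \cdot \sigma$ and $g(u_\e)+g^*(\div\sigma)\geq u_\e\div\sigma$. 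Integration then yields
\[
E_\e(u_\e) + H(\sigma) \geq \int_\Omega \bigl[D\Psi_\e^{-T}\nabla u_\e \cdot \sigma + u_\e \div \sigma\bigr] + \int_\Omega \bigl[f(D\Psi_\e^{-T}\nabla u_\e) + g(u_\e)\bigr](\beta_\e - 1).
\]

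The boundary conditions imply $\int_\Omega [\nabla u_\e \cdot \sigma + u_\e \div \sigma]=0$ (either $u_\e=0$ on $\partial\Omega$ in case $(D)$, or $\sigma\cdot n=0$ on $\partial\Omega$ in case $(N)$), so the first integral above equals $\int_\Omega(D\Psi_\e^{-T}-I)\nabla u_\e\cdot \sigma$. Using the $L^\infty$-expansion $D\Psi_\e^{-T}-I=-\e\,DV^T+O(\e^2)$ together with the uniform convergence $(\beta_\e-1)/\e\to\div V$, changing sign and dividing by $\e$ lead to
\[
q_\e(V)\leq \int_\Omega \nabla u_\e\cdot DV\sigma \; - \; \int_\Omega \bigl[f(D\Psi_\e^{-T}\nabla u_\e) + g(u_\e)\bigr]\frac{\beta_\e - 1}{\e} \; + \; o(1).
\]

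To pass to the limit superior, I would first extract a subsequence $\e_m$ with $q_{\e_m}(V)\to\limsup_\e q_\e(V)$, and then a further subsequence (not relabelled) such that $u_{\e_m}\rightharpoonup u_0$ weakly in $W(\Omega)$ with $u_0\in\mathcal S$, as provided by Proposition \ref{lemma_Gamma}(i). The linear integral converges to $\int_\Omega\nabla u_0\cdot DV\sigma$ since $\nabla u_{\e_m}\rightharpoonup \nabla u_0$ weakly in $L^p$ and $DV\sigma\in L^{p'}$. The main obstacle is the passage to the limit in the nonlinear integral, which the weak convergence of $\nabla u_\e$ alone does not justify. The crucial input is the convergence of minima $E_{\e_m}(u_{\e_m})\to E(u_0)$ granted by Proposition \ref{lemma_Gamma}(i): combining it with the weak lower semicontinuity of $I_f$ and $I_g$ localized on every Borel set $A$ and its complement, the equality between the limit of the total energy and the sum of four liminfs forces equality in each component, yielding the setwise convergence of the measures $f(D\Psi_{\e_m}^{-T}\nabla u_{\e_m})\,dx$ and $g(u_{\e_m})\,dx$, and hence convergence of their integrals against the bounded function $\div V$. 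This gives $\limsup_\e q_\e(V)\leq \int_\Omega A(u_0,\sigma):DV$ for every $\sigma\in\mathcal S^*$, with $u_0$ depending on the chosen subsequence but \emph{not} on $\sigma$; taking the infimum over $\sigma$ and then the supremum over $u\in\mathcal S$ yields the desired bound.
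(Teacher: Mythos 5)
Your proposal is correct, and it shares the paper's overall skeleton (fix $\sigma\in\mathcal S^*$, test against minimizers $u_\e$ of $E_\e$, apply the Fenchel inequality, pass to the limit, then take the infimum over $\sigma$); but it diverges from the paper at the decisive step, namely how the limit $\e\to0^+$ is justified. The paper applies the Fenchel inequality to the \emph{entire} weighted integrand, i.e.\ it bounds $[f(D\Psi_\e^{-T}\nabla u_\e)+g(u_\e)]\beta_\e$ from below by $[(D\Psi_\e^{-1}\sigma)\cdot\nabla u_\e+u_\e\div\sigma-f^*(\sigma)-g^*(\div\sigma)]\beta_\e$, so that after expanding $\beta_\e$ and $D\Psi_\e^{-1}$ the resulting upper bound for $q_\e(V)$ is \emph{affine} in $(u_\e,\nabla u_\e)$ with fixed $L^{p'}/L^{q'}$ coefficients built from $\sigma$; the weak convergence $u_\e\rightharpoonup u_0$ then suffices, and the optimality conditions of Lemma \ref{lemma_opt_cond} convert the limit into $\int_\Omega A(u_0,\sigma):DV$. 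You instead apply Fenchel only to the leading-order part and keep the genuinely nonlinear remainder $\int_\Omega[f(D\Psi_\e^{-T}\nabla u_\e)+g(u_\e)]\,(\beta_\e-1)/\e$, which you then handle by combining the convergence of minimal energies $E_{\e_m}(u_{\e_m})\to E(u_0)$ (a standard consequence of the equicoercive $\Gamma$-convergence in Proposition \ref{lemma_Gamma}) with the localized weak lower semicontinuity of $I_f$ and $I_g$ to deduce setwise convergence of the energy densities, hence convergence of their integrals against $\div V\in L^\infty$. This argument is sound (after subtracting affine minorants to make the densities nonnegative, the ``sum of liminfs equals the limit of the sum'' argument does force $\mu_m(A)\to\mu(A)$ for every Borel $A$), and you are in fact more careful than the paper in first extracting a subsequence realizing the $\limsup$ before extracting the weakly convergent one. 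The trade-off is that your route needs the extra input of energy convergence and a measure-theoretic lemma, whereas the paper's choice of where to apply duality removes the nonlinearity in $u_\e$ altogether and reduces everything to weak continuity of linear forms; the paper's version is therefore more elementary, while yours illustrates a more robust technique that would survive in settings where the bound cannot be linearized.
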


\proof
We are done if we show the existence of $u_0\in \mathcal S$ such that
\begin{equation}\label{upperbound}
\limsup_{\e\to 0^+}q_\e(V)\leq \inf_{\sigma \in \mathcal S^*} \int_\Omega A(u_0,\sigma):DV\ .
\end{equation}

To that aim we observe that, if $E_\e$ and $H$ are the functionals defined respectively in (\ref{Ee}) and (\ref{E*}),
by Lemma \ref{lemma_opt_cond} and Lemma \ref{rewriting} there holds
\begin{equation}\label{q_eps}
q_\e(V)=\frac{J(\Omega_\e)-J^*(\Omega)}{\e}=\frac{-\inf E_\e - \inf H}{\e}\, .
\end{equation}
Let
$u _\e \in {\rm Argmin} (E_\e)$ and $\sigma \in \mathrm{Argmin}(H)=\mathcal S^*$.
In view of (\ref{q_eps}), $q _\e (V)$ reads
\begin{equation}\label{q_eps2}
q_\e(V)=\frac{1}{\e} \left[  - \int_\Omega\left[  f(D\Psi_\e^{-T}\nabla u_\e) + g(u_\e)  \right]\,\beta_\e - \int_\Omega [f^*(\sigma) + g^*(\div \sigma)]  \right]\ .
\end{equation}

We observe that $\beta_\e$ and $D\Psi_\e^{-1}$ admit the following asymptotic expansions in terms of $\e$:
$$
\beta_\e=1+\e \div V + \e^2 m_\e\ ,\ \ D\Psi_\e^{-1}=I-\e DV + \e^2 M_\e\ ,
$$
for some $m_\e\in L ^ \infty(\mathbb R)$ and $M_\e\in L ^ \infty(\mathbb R^n;\mathbb R^{n\times n})$.
Thus, if we apply the Fenchel inequality and we set for brevity
\begin{align*}
a_\e & := m_\e \sigma -\div V\, DV\,\sigma + M_\e \sigma -\e m_\e DV\sigma + \e \div V\,M_\e \sigma + \e^2 m_\e M_\e \sigma\ ,
\\
\alpha_\e & := m_\e \div \sigma\ ,
\end{align*}
we obtain
\begin{align}
 \int_\Omega\left[  f(D\psi_\e^{-T}\nabla u_\e) + g(u_\e)  \right]\,\beta_\e  & \geq \int_\Omega \left[(D\Psi_\e^{-1} \sigma) \cdot \nabla u_\e + \div \sigma u_\e - f^*(\sigma) - g^*(\div \sigma) \right]\,\beta_\e \notag
\\
 & =  \int_\Omega [\sigma \cdot \nabla u_\e + u_\e\, \div \sigma - f^*(\sigma) - g^*(\div\sigma) ] + \notag
 \\ & +  \e \int_\Omega [( \sigma \cdot \nabla u_\e + u_\e\,\div \sigma - f^*(\sigma) - g^*(\div\sigma))\div V - ( DV  \sigma) \cdot \nabla u_\e ] +\notag
 \\ & +  \e^2 \int_\Omega\left[ a_\e \cdot \nabla u_\e + \alpha_\e \, u_\e -m_\e (f^*(\sigma) +  g^*(\div\sigma))\right] \, .\label{estimate}
\end{align}
By combining (\ref{q_eps2}) and (\ref{estimate}), and recalling that\
$
\int_\Omega \sigma \cdot \nabla u_\e + u_\e\, \div \sigma=0\,,
$
we infer
\begin{equation}\label{q_eps3}
q_\e(V) \leq\int_\Omega [f^*(\sigma) + g^*(\div \sigma)]\div V +\int_\Omega [( DV \, \sigma) \cdot \nabla u_\e -( \sigma \cdot \nabla u_\e + u_\e\,\div  \sigma)\div V ] - \e C_\e\ ,
\end{equation}
where
$$
C_\e:=\int_\Omega\left[ a_\e \cdot \nabla u_\e + \alpha_\e \, u_\e -m_\e (f^*(\sigma) +  g^*(\div\sigma))\right]\ .
$$
In order to show (\ref{upperbound}), we now want to pass to the limit as $\e \to 0 ^+$ in the r.h.s.\ of (\ref{q_eps3}).
By Proposition \ref{lemma_Gamma} (i), up to subsequences there holds
\begin{equation}\label{u0}
u_\e \stackrel{W^{1,p}(\Omega)}{\rightharpoonup} u_0
\end{equation}
for some $u_0\in \mathrm{Argmin}(E)= \mathcal S$ (notice that {\it a priori}  $u_0$, as well as $u_\e$, depend on $V$).

On the other hand we remark that
$$
\sup_{\e}\|m_\e\|_{L^\infty(\Omega)}\leq C\ ,\ \ \sup_{\e}\|M_\e\|_{L^\infty(\Omega;\svre^{n\times n})}\leq C\ ,\ \
\sup_\e \|a_\e\|_{L^{p'}(\Omega;\svre^n)}\leq C\ ,\ \ \sup_\e \|\alpha_\e\|_{L^{q'}(\Omega)}\leq C\ ,
$$
which together with  (\ref{u0}) implies that the sequence $C_\e$ remains bounded as $\e$ goes to $0$. Then, by passing to the limit as $\e\to 0^+$ in (\ref{q_eps3}), we conclude that
\begin{equation}\label{q++}
\limsup_{\e\to 0^+}q_\e(V)\leq \int_\Omega [f^*(\sigma) + g^*(\div \sigma)]\div V +\int_\Omega [( DV  \sigma) \cdot \nabla u_0 -( \sigma \cdot \nabla u_0+ u_0\,\div  \sigma)\div V ]\ .
\end{equation}
Since $u_0\in \mathcal S$ and $\sigma\in \mathcal S^*$, recalling the optimality conditions (ii) in Lemma \ref{lemma_opt_cond}, we can rewrite the r.h.s.\ of (\ref{q++}) as
$$
\int_\Omega [ ( DV  \sigma) \cdot \nabla u_0 - (f(\nabla u_0) + g(u_0))\div V ]\,dx = \int_\Omega A(u_0, \sigma):DV\,.
$$
Finally, by the arbitrariness of ${\sigma\in \mathcal S^*}$, we obtain (\ref{upperbound}).

\qed

\bigskip

\bigskip
{\bf Proof of Theorem \ref{thm_firstder}.}

By Propositions \ref{thm_lowerbound2} and \ref{thm_upperbound}, there holds
\begin{equation}\label{inequalities}
\inf_{\sigma \in \mathcal S^*}\sup_{u \in \mathcal S} \int_\Omega A(u,\sigma):DV \leq \liminf_{\e\to 0^+} q_\e(V) \leq \limsup_{\e\to 0^+} q_\e(V)\leq \sup_{u \in \mathcal S} \inf_{\sigma \in \mathcal S^*}\int_\Omega A(u,\sigma):DV\ .
\end{equation}
Since the sup-inf at the r.h.s.\ of (\ref{inequalities}) is lower than or equal to the inf-sup at the l.h.s., we infer that all the inequalities in (\ref{inequalities}) are in fact equalities; in particular, the sequence $q_\e(V)$ converges as $\e\to 0^+$, and its limit provides the shape derivative $J'(\Omega,V)$, namely
$$
J'(\Omega,V)=\sup_{u \in \mathcal S}\inf_{\sigma \in \mathcal S^*} \int_\Omega A(u,\sigma):DV=\inf_{\sigma \in \mathcal S^*}\sup_{u \in \mathcal S} \int_\Omega A(u,\sigma):DV \ .
$$
Next, we observe that the functionals $\sigma \mapsto \int _\Omega A (u, \sigma):DV$ and $u \mapsto \int _\Omega A (u, \sigma):DV$ are linearly affine (see respectively  Definition \ref{A1} and Remark \ref{A2} (ii)), and hence weakly continuous respectively on $X(\Omega; \re ^n)$ and $W(\Omega)$.  Moreover, by Lemma \ref{compactness}, the sets $\mathcal S\subset W(\Omega)$ and $\mathcal S^*\subset X (\Omega; \re ^n)$ are weakly compact. Therefore, by Proposition \ref{infsup}, the
sup-inf or inf-sup above is attained at some optimal pair $( u^\star, \sigma^\star)\in \mathcal S\times \mathcal S^*$ (which {\it a priori} depends on $V$).

\qed

\bigskip
{\bf Proof of Corollary \ref{optiA}}.
Let $V$ be a deformation field in $\mathcal D (\Omega; \re ^n)$. Clearly, since $V$ is compactly supported into $\Omega$, for every $\e$ small enough the deformed domain $\Omega _\e$ in (\ref{omegae}) coincides with $\Omega$, so that
\begin{equation}\label{J'=0}
J'(\Omega,V)=0\ .
\end{equation}

In order to prove assertion (i), let us fix $\overline u\in \mathcal S$ and define $L_{\u}:\mathcal D(\Omega;\mathbb R^n)\times \mathcal S^*\to \re$ as
\begin{equation}\label{L1}
L_{\u}(V,\sigma):=-\int_\Omega A(\u,\sigma):DV\ .
\end{equation}
By the linearity with respect to $V$, (\ref{divfree1}) is equivalent to  showing the existence of $\wh \sigma\in \mathcal S^*$ such that
\begin{equation}\label{expression2}
\inf_{V\in  \mathcal D(\Omega;\svre^n)} L_{\u}(V,\wh \sigma)\ \ge\ 0\ .
\end{equation}
Since by Lemma \ref{compactness} the set  $\mathcal S^*$ is convex and weakly compact in $X(\Omega;\mathbb R^n)$, $L_{\u}(\cdot, \sigma)$ is convex, $L_{\u}(V,\cdot )$ is concave and weakly upper semicontinuous, Proposition \ref{infsup} applies and gives the existence of $\wh \sigma\in \mathcal S^*$ (depending on $\u$) such that
$$
\inf_{V\in \mathcal D(\Omega;\svre^n)}\sup_{\sigma\in \mathcal S^*} L_{\u}(V,\sigma)\ =\
\sup_{\sigma\in \mathcal S^*}\inf_{V\in \mathcal D} L_{\u}(V,\sigma)\ =\ \inf_{V\in  \mathcal D(\Omega;\svre^n)} L_{\u}(V,\wh \sigma)\ .
$$
Now the first term in previous equalities is non negative since by (\ref{J'=0}) and Theorem \ref{thm_firstder}, for every $V\in \mathcal D(\Omega;\mathbb R^n)$, it holds
$$
\sup_{\sigma \in \mathcal S^*} L_{\u}(V,\sigma)\ =\ -\inf_{\sigma \in \mathcal S^*}\int_\Omega A(\u,\sigma):DV \ge -J'(\Omega,V)=0\ .
$$
Thus we have proved (\ref{expression2}) and therefore assertion (i).

The proof of assertion (ii) is fully analogous: if $\ov  \sigma$ denotes a fixed element in $\mathcal S^*$ and (\ref{L1}) is replaced by the functional
 $L_{\ov \sigma}:\mathcal D(\Omega;\re^n)\times \mathcal S\to \re$ defined as
$$
L_{\ov \sigma}(V,u):=\int_\Omega A(u,\ov\sigma):DV\ ,
$$
by arguing  in the same way as done above to obtain (\ref{expression2}), one gets the inequality $\displaystyle\inf_{V\in  \mathcal D(\Omega;\svre^n)} L_{\ov \sigma}(V,\wh u)\ \ge\ 0$, which implies (\ref{divfree3}).

\qed

\bigskip

We now turn attention to the proof of Theorem \ref{cor_firstder}. To that aim, we need to state some preliminary facts about   boundary traces. They are collected in the next lemma, where we adopt the notation introduced in  Section \ref{not} for the traces in $BV$ and in $\DM $.

\begin{lemma}\label{prop_trace}
Given a domain $\Omega$  with piecewise $C^1$ boundary,  let $v$ and $\Psi$ be respectively a scalar function and a vector field defined on $\Omega$ which are both $L ^ \infty$ and $BV$. Denote by $C _{r, \rho} ^ -$ and $\tilde n$ the cylinder and the extension of the unit outer normal defined in $(\ref{cilindro})$ and $(\ref{estensione})$. Then the following equalities hold true at $\mathcal H ^ {n-1}$-a.e. $x_0 \in \partial \Omega$:
\begin{eqnarray}
& \T(v)(x_0)\,n(x_0) = [v \,n]_{\partial \Omega}(x_0)\,; & \label{duetracce}
 \\
\noalign{\medskip}
&\displaystyle{\lim_{r,\rho\to 0^+}\mint_{C^-_{r,\rho}(x_0)} \Big| \Psi(x)-\T(\Psi)(x_0) \Big|=0 \,;}
& \label{Leb_cyl2}
 \\
\noalign{\medskip}
 &\displaystyle{\lim_{r,\rho\to 0^+} \mint_{C^-_{r,\rho}(x_0)} \Big|  \Psi(x)\cdot \tilde n(x) - \T(\Psi)(x_0)\cdot n(x_0) \Big|=0}\ .
& \label{trace_tesi}
\end{eqnarray}

\end{lemma}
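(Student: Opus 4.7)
For (\ref{duetracce}), I would compare the defining integration-by-parts formulas of the two sides. The $i$-th row of $vI$ is the vector field $v e_i$, whose distributional divergence is the Radon measure $D_i v \in \mathcal M(\Omega)$; hence for every $\varphi \in C^1(\overline{\Omega})$ the defining identity (\ref{gen_div}) gives
$$\int_{\partial \Omega} [v e_i \cdot n]_{\partial \Omega}\, \varphi\, d\mathcal H^{n-1} \ = \ \int_\Omega v\, \partial_i \varphi\ +\ \int_\Omega \varphi\, dD_i v,$$
which coincides with the right-hand side of (\ref{BV_div}) for $\T(v)\, n_i$. By density of the admissible traces in $L^1(\partial \Omega)$, this forces $[v e_i \cdot n]_{\partial \Omega} = \T(v)\, n_i$ in $L^\infty(\partial \Omega)$ for every $i$, and reassembling yields (\ref{duetracce}).

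For (\ref{Leb_cyl2}), at $\mathcal H^{n-1}$-a.e.\ $x_0$ the boundary is $C^1$ near $x_0$. A local change of variables flattening $\partial\Omega$ (with Jacobian factors tending to $1$ as the neighborhood shrinks) replaces $C^-_{r,\rho}(x_0)$ by a cylinder comparable to $B'_\rho \times (0,r)$ in the model half-space. It is then enough to show, for $\Psi\in L^\infty\cap BV$ and $\mathcal H^{n-1}$-a.e.\ $y_0'$ on the flattened boundary,
$$\lim_{r,\rho\to 0^+}\ \mint_0^r \mint_{B'_\rho(y'_0)} \big|\Psi(y',t) - \T(\Psi)(y'_0)\big|\, dy'\, dt = 0.$$
I would split the integrand as $|\Psi(y',t)-\T(\Psi)(y')| + |\T(\Psi)(y') - \T(\Psi)(y'_0)|$. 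The second term, once averaged in $y' \in B'_\rho(y'_0)$, vanishes at every Lebesgue point of the $L^1$ function $\T(\Psi)$ on the flattened boundary. For the first term, writing $\Phi(y',t) := \Psi(y',t) - \T(\Psi)(y')$ (a $BV$ function with zero trace), the cylindrical average is controlled by the density of the total variation $|D\Phi|$ near $y'_0$, which is negligible compared to $|B'_\rho|$ at $\mathcal H^{n-1}$-a.e.\ point because $\Phi$ has trace zero; combined with the classical $L^1$-convergence of normal slices of a $BV$ function to its trace (see \cite{AFP}), this gives the required vanishing.

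Claim (\ref{trace_tesi}) is then deduced from (\ref{Leb_cyl2}) via the algebraic decomposition
$$\Psi(x)\cdot\tilde n(x)-\T(\Psi)(x_0)\cdot n(x_0) \ =\ [\Psi(x)-\T(\Psi)(x_0)]\cdot\tilde n(x) + \T(\Psi)(x_0)\cdot[\tilde n(x)-n(x_0)].$$
The first summand is controlled by (\ref{Leb_cyl2}) since $|\tilde n| = 1$, while the second is bounded by $|\T(\Psi)(x_0)|\,\|\tilde n - n(x_0)\|_{L^\infty(C^-_{r,\rho}(x_0))}$, which tends to $0$ as $\rho \to 0^+$: indeed at $\mathcal H^{n-1}$-a.e.\ boundary point the unit normal $n$ is continuous, so $\tilde n$ defined by (\ref{estensione}) converges to $n(x_0)$ uniformly on the shrinking cylinder.

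The main technical obstacle is the cylindrical Lebesgue property (\ref{Leb_cyl2}): since $r$ and $\rho$ shrink independently, the family $\{C^-_{r,\rho}(x_0)\}$ is not a regular differentiation family (the ratio $|C^-_{r,\rho}|/|B_{r+\rho}(x_0)|$ is not bounded below), so classical Lebesgue differentiation on balls does not apply directly. One must instead exploit the specific $BV$ structure through inner normal slices, and select a single $\mathcal H^{n-1}$-null exceptional set of base points which is simultaneously bad for the slice convergence, the Lebesgue points of $\T(\Psi)$, and the $C^1$ regularity of $\partial\Omega$; all needed ingredients can be found in \cite{AFP,ACM,A,CF}.
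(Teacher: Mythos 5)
Your proof is correct in outline and follows essentially the same skeleton as the paper's, with two points of divergence worth recording. For (\ref{duetracce}) your argument is exactly the paper's: both sides are characterized by the same integration-by-parts identity against $\varphi\in C^1(\overline\Omega)$, and equality of the boundary integrals for all such $\varphi$ forces equality $\mathcal H^{n-1}$-a.e. For (\ref{Leb_cyl2}) the paper simply cites \cite[Section 5.3]{EG}, whereas you sketch the underlying proof (flattening, slicing, Lebesgue points of $\T(\Psi)$); your diagnosis that the cylinders $C^-_{r,\rho}$ with $r,\rho$ shrinking independently do not form a regular differentiation family is the right reason why a bespoke argument is needed. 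One step of your sketch is imprecise, though: the function $\Phi(y',t):=\Psi(y',t)-\T(\Psi)(y')$ need not be $BV$ (the trace is only $L^1$ in $y'$), and the vanishing of the relevant density is not a consequence of "$\Phi$ has trace zero"; the correct mechanism is to bound the first term by $|D\Psi|(B'_\rho\times(0,r))/|B'_\rho|$ via the fundamental theorem of calculus on normal slices of $\Psi$ itself, and then use that the $(n-1)$-dimensional upper density of the finite measure $|D\Psi|$ (restricted to shrinking collar neighborhoods of $\partial\Omega$) vanishes at $\mathcal H^{n-1}$-a.e.\ boundary point. Since the statement is the standard $BV$ trace theorem, this does not invalidate your proof, but as written that step would not survive scrutiny. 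For (\ref{trace_tesi}) your add-and-subtract decomposition is the same as the paper's; the only difference is in the normal-vector term: you invoke uniform convergence $\tilde n\to n(x_0)$ on the shrinking cylinder, valid at every point interior to a $C^1$ piece of $\partial\Omega$ (hence $\mathcal H^{n-1}$-a.e.), while the paper proves the weaker $L^1$-average statement (\ref{Leb_cyl1}) at Lebesgue points of $n$ via a graph parametrization and the area formula. Your version is slightly more elementary under the piecewise $C^1$ hypothesis; the paper's version has the advantage of only using that $n$ is in $L^\infty(\partial\Omega)$, and so would extend to Lipschitz boundaries.
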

\begin{proof}
Let $v\in BV(\Omega)\cap L^\infty(\Omega)$. As an element of $BV(\Omega)$, $v$ has a trace $\T(v)\in L^1(\partial \Omega)$ and the product $\T(v)\,n$ is characterized in a functional way by (\ref{BV_div}).

On the other hand, since $v\in L^\infty(\Omega)$ and $Dv$ is a Radon measure over $\Omega$, we infer that $vI$ is an element of $\DM(\Omega;\re^{n\times n})$, namely $v\in \DM(\Omega)$. In particular, $v$ has a normal trace $[v \,n]_{\partial \Omega}\in L^\infty(\partial \Omega)$, which is characterized by (\ref{gen_div}).
%
By comparing the two characterizations (\ref{BV_div}) and (\ref{gen_div}) we infer that, for every test function $\varphi\in C^1(\ov\Omega)$, it holds
$$
\int_{\partial \Omega} \T(v)\varphi\,n \, d\mathcal H^{n-1}=\int_{\partial \Omega} [v\,n]_{\partial \Omega} \varphi\,d\mathcal H^{n-1}\ ,
$$
which implies the validity of (\ref{duetracce})  $\mathcal H ^ {n-1}$-a.e.\ on $\partial \Omega$.

The proof of (\ref{Leb_cyl2}) can be found in \cite[Section 5.3]{EG}.

Finally, in order to prove (\ref{trace_tesi}), we claim that, if
$x_0\in \partial \Omega$ is a Lebesgue point for  $n\in L^\infty(\partial \Omega)$, there holds
\begin{equation}\label{Leb_cyl1}
\lim_{r,\rho\to 0^+}\mint_{C^-_{r,\rho}(x_0)} | \tilde n(x)-n(x_0) |=0\ ,
\end{equation}

Once proved this claim, (\ref{trace_tesi}) follows easily. Indeed, by adding and subtracting suitable terms to the integrand  in (\ref{trace_tesi}), we obtain:
\begin{align}
&\mint_{C^-_{r,\rho}(x_0)} \Big|
 \Psi(x)\cdot \tilde n(x) - \T(\Psi)(x_0)\cdot n(x_0)\Big| \notag
\\
&\leq \mint_{C^-_{r,\rho}(x_0)} \Big|
\Psi(x)\cdot \tilde n(x) - \Psi(x)\cdot n(x_0)
 \Big| \notag
+ \mint_{C^-_{r,\rho}(x_0)} \Big|
\Psi(x)\cdot n(x_0) - \T(\Psi)(x_0)\cdot n(x_0)
\Big| \notag
\\
& \leq \|\Psi\|_{L^\infty(\Omega;\svre^n)} \mint_{C^-_{r,\rho}(x_0)} |\tilde{n}(x)-n(x_0)| + \mint_{C^-_{r,\rho}(x_0)} \Big| \Psi(x)- \T(\Psi)(x_0) \Big|\notag
\end{align}
and the two integrals in the last line are infinitesimal as $r,\rho$ tend to zero for $\mathcal H^{n-1}$-a.e. $x_0\in \partial \Omega$, respectively thanks to (\ref{Leb_cyl1}) and (\ref{Leb_cyl2}).

Let us go back to the proof of (\ref{Leb_cyl1}). Without loss of generality, we may assume that $n(x_0)=(0,0,\ldots,1)$ and that, in a neighborhood of $x_0$, the boundary $\partial \Omega$ is the graph of a $C^1$ function $h:A\to \mathbb R$, for some open set $A\subset \mathbb R^{n-1}$. More precisely, denoting by $x'$ the first $n-1$ variables of a point $x\in \mathbb R^{n}$, we can write
$$
B_\rho(x_0)\cap \partial \Omega =\{(x',h(x'))\ :\ x'\in A_\rho(x_0)\}
$$
for some open set $A_\rho(x_0)\subset \mathbb R^{n-1}$, and
$$
C^-_{r,\rho}(x_0)=\{(x',h(x')-t)\ :\ x'\in A_\rho(x_0)\,,\ t\in(0,r)\}\ .
$$
Recalling that, by definition of the extension $\tilde n$, there holds
$\tilde n(x',h(x')-t)=n(x',h(x'))$ for  $x'\in A_\rho(x_0)$, we have
\begin{align*}
 \mint_{C^-_{r,\rho}(x_0)} |\tilde{n}(x)-n(x_0)| &= \mint_{A_\rho(x_0)\times(0,r)} |\tilde{n}(x',h(x')-t) - n(x_0)|\,dx'\,dt
\\
& \leq \mint_{A_\rho(x_0)} |n(x',h(x'))- n(x_0)|\sqrt{1+|Dh|^2(x')}\,dx' \\ & = \frac{\mathcal H^{n-1}(B_\rho(x_0)\cap \partial \Omega)}{\mathcal L^{n-1}(A_\rho(x_0))} \mint_{B_\rho(x_0)\cap \partial \Omega} |n(y)- n(x_0)|\,d\mathcal H^{n-1}(y)
\\ & \leq C\,\mint_{B_\rho(x_0)\cap \partial \Omega} |n(y)- n(x_0)|\,d\mathcal H^{n-1}(y)\, ,
\end{align*}
where in the third line we used the area formula. Passing to the limit as $\rho\to 0$, we are led to (\ref{Leb_cyl1})  since by assumption $x_0$ is a Lebesgue point for $n$.

\end{proof}

\bigskip
{\bf Proof of Theorem \ref{cor_firstder}}.
Let $\u\in \mathrm{Lip}(\Omega)$ be the unique solution to problem $J(\Omega)$. Since $f$ is Gateaux differentiable except at most at the origin, the tensor $A(\u)$ is uniquely determined as in (\ref{A(ubar)}). By applying Theorem \ref{thm_firstder} and recalling that $\mathcal S$ is a singleton, we infer that
$$
J'(\Omega,V)=\int_\Omega A(\u):DV\ .
$$
Using the growth conditions (\ref{pq}) satisfied by $f$ and $g$, we see that
$A(\u)$ is in $L^\infty(\Omega;\mathbb R^{n\times n})$. Taking into account (\ref{divfree2}) in Corollary \ref{optiA}, we infer that $A(\u)$ is divergence free in the sense of distributions, in particular $A(\u)$ belongs to $\DM (\Omega;\mathbb R^{n\times n})$.
As such, since we assumed $\partial \Omega$ Lipschitz,  it admits a normal trace
$[A(\u)\,n]_{\partial \Omega} \in L^\infty(\partial \Omega;\mathbb R^n)$; moreover, by applying the generalized divergence theorem recalled in (\ref{gen_div}), we obtain (\ref{J'bis}).

\medskip

Let us now compute the normal trace $[A(\u)\,n]_{\partial \Omega}$, assuming that $\partial \Omega$ is piecewise $C^1$, that $\nabla  \ov u \in BV (\Omega)$, and that there exists $\overline\sigma \in \mathcal S^*\cap BV(\Omega;\re^n)$. Let us define
\begin{align*}
a_D(\u)& :=\nabla \u\cdot \overline \sigma - f(\nabla \u)\ ,
\\
a_N(\u)& :=-f(\nabla \u) - g(\u)\ .
\end{align*}
We remark that, by the Fenchel equality, $a_D(\u)=f^*(\overline{\sigma})$ in $\Omega$.

In the sequel, the notation $a(\u)$ is adopted for brevity in all the assertions which apply indistinctly for $a_D(\u)$ and $a_N(\u)$.

From the assumption $\u\in {\rm Lip}(\Omega)$ and
the growth conditions (\ref{pq}), we see that $a(\u)\in L^\infty(\Omega)$. We claim that $a(\u)\in BV(\Omega)$. Indeed, under the standing assumptions, $f$ and $g$ are locally Lipschitz, and the composition of a locally Lipschitz with a bounded $BV$ function is still $BV$, so that $f(\nabla \u)$ and $g(\u)$ are in $BV$. Moreover, the product of two functions which are in  $L ^ \infty \cap BV$ remains in  $L ^ \infty \cap BV$, so that the scalar product $\nabla \u\cdot \overline \sigma $ is in $BV$.
Then the claim is proved. In particular, the tensor $a(\u)I$ is an element of $\DM (\Omega;\mathbb R^{n\times n})$, and consequently
its normal trace $[a(\u)I\,n]_{\partial \Omega}$ is well defined. Moreover, according to equality (\ref{duetracce}) in Lemma \ref{prop_trace},
it can be identified with the trace of $a (\ov u)$ as a $BV$ function, namely
\begin{equation}\label{tracce=}
 \T ( a (\u)) n = [a(\u)I \,n]_{\partial \Omega}  \qquad \mathcal H ^ {n-1}\hbox{-a.e. on } \partial \Omega\, .
\end{equation}

In view of (\ref{J'bis}) and (\ref{tracce=}), in order to obtain (\ref{thesis_traces}) it is enough to show that
\begin{align*}
[A (\u)\, n - a_D(\u)I \, n ]_{\partial \Omega}  &= 0 \quad \mathrm{\ in\ case\ }(D)\ ,
\\
[A (\u)\, n - a_N(\u)I \, n]_{\partial \Omega} & = 0 \quad \mathrm{\ in\ case\ }(N)\ ,
\end{align*}
namely
\begin{align}
& \big [\big (\nabla \u  \otimes \overline \sigma- g (\u) I - \nabla \ov u \cdot \overline \sigma I  \big ) \,n \big ]_{\partial \Omega}  = 0 \quad \mathrm{\ in\ case\ }(D)\ ,\label{differenzaD}
\\
& [(\nabla \u  \otimes \overline \sigma)\, n]_{\partial \Omega}  = 0 \quad \mathrm{\ in\ case\ }(N)\ .\label{differenzaN}
\end{align}

Let us first treat the Dirichlet case. Since by assumption $\partial \Omega$ is piecewise $C ^1$, we can exploit the pointwise characterization (\ref{pointwisenormalt}) of the normal trace and  rewrite (\ref{differenzaD}) as
\begin{equation}\label{diff}
\lim_{r,\rho\to 0^+}\mint_{C^-_{r,\rho}(x_0)} \big [( \overline \sigma \cdot \tilde n) \nabla \ov u - g(\u) \tilde n
-  (\nabla \ov u  \cdot\overline \sigma) \tilde n \big ]
 = 0 \qquad \hbox{ for } \mathcal H ^ {n-1}\hbox{-a.e.\ } x_0 \in \partial \Omega\,.
\end{equation}

Recalling that $g(\u)$ is a continuous function which vanishes on $\partial \Omega$,  we have
$$\lim_{r,\rho\to 0^+}\mint_{C^-_{r,\rho}(x_0)}  [g(\u)\, \tilde n]
 = 0 \qquad \hbox{ for } \mathcal H ^ {n-1}\hbox{-a.e.\ } x_0 \in \partial \Omega\,.  $$
On the other hand, setting $P_{\tilde n}(\nabla \u):=\nabla \u - (\nabla \u\cdot \tilde n ) \tilde n$, we have
\begin{eqnarray*}
 \displaystyle{  \left| \mint_{C^-_{r,\rho}(x_0)} \big [   ( \overline \sigma \cdot \tilde n) \nabla \ov u - (\nabla \ov u  \cdot\overline \sigma) \tilde n \big ] \right| }
 &=  & \displaystyle{\left| \mint_{C^-_{r,\rho}(x_0)} \big [   ( \overline \sigma \cdot \tilde n) P_{\tilde n}(\nabla \u) - (P_{\tilde n}(\nabla \u) \cdot\overline \sigma )\tilde n \big ]  \right|}
\\ & \leq   & \displaystyle {2 \|\overline \sigma \|_{L^\infty} \mint_{C^-_{r,\rho}(x_0)}|P_{\tilde n}(\nabla \u)|}\,,
 \end{eqnarray*}
(to justify the latter inequality recall that $\overline \sigma$ belongs to $L ^ \infty(\Omega; \re ^n)$ due to the assumption $\nabla\u \in L ^ \infty (\Omega; \re ^n)$ and Lemma \ref{lemma_opt_cond} (ii)).
Therefore, in view of (\ref{diff}), claim (\ref{differenzaD}) is proved once we can show that
\begin{equation}\label{claimP}
\lim_{r,\rho\to 0^+}\mint_{C^-_{r,\rho}(x_0)} |P_{\tilde n}(\nabla \u)|= 0\ .
\end{equation}
Now we observe that, since by assumption $\u = 0$ on $\partial \Omega$ and $\nabla \u \in BV(\Omega; \re ^n)$,
the trace  $\T(\nabla \u)$ is normal to $\partial \Omega$, that is
\begin{equation}\label{traccenormali}
\T(\nabla \u)=\left(\T(\nabla \u)\cdot n \right) n\ \ \ \mathcal{H}^{n-1}-\mathrm{a.e.\ on\ }\partial \Omega\,.
\end{equation}
Indeed, thanks to the assumption that $\partial \Omega$ is piecewise $C ^ 1$, the equality (\ref{traccenormali}) can be proved by an approximation argument, which can be found for instance in  \cite[Proposition 1.4 and Section 2]{Dem} (see also \cite[Theorem 2.3]{DT}, where the same result is proved in a more general framework, allowing in particular piecewise $C^1$ boundaries). Eventually,
by (\ref{traccenormali}), we have for $\mathcal{H}^{n-1}\mathrm{-a.e.\ } x_0\in \partial \Omega$
\begin{align*}
& \mint_{C^-_{r,\rho}(x_0)}|P_{\tilde n}(\nabla \u)| \leq \mint_{C^-_{r,\rho}(x_0)}\!\!\!\! \big | \nabla \u (x)- \T(\nabla \u) (x_0)\big |   + \big |  \left(\T(\nabla \u)(x_0)\cdot n(x_0) \right) n(x_0) - (\nabla \u(x) \cdot \tilde n(x)) \tilde n(x) \big |
\\
& \quad \quad \quad \leq \mint_{C^-_{r,\rho}(x_0)}\!\!\!\! \big | \nabla \u (x)- \T (\nabla \u) (x_0)\big |   +
 \big | \T(\nabla \u)(x_0)\cdot n(x_0) - \nabla \u(x) \cdot \tilde n(x)\big |+ \|\nabla \u\|_{L^\infty}\big |  n(x_0) - \tilde n(x)\big |\,.
\end{align*}

By Lemma \ref{prop_trace} (precisely, using (\ref{Leb_cyl2}), (\ref{trace_tesi}) and (\ref{Leb_cyl1})), we infer (\ref{claimP})
and the proof of (\ref{differenzaD}) is achieved.

\medskip

Let us now consider the Neumann case. By assumption $\displaystyle
\T(\overline \sigma)\cdot n =0\quad \mathcal H ^ {n-1}\hbox{-a.e.\ on}\ \partial \Omega\,,
$ therefore by applying (\ref{trace_tesi}) we obtain
$$
\lim_{r,\rho\to 0^+}\mint_{C^-_{r,\rho}(x_0)} | \overline \sigma \cdot \tilde n|
=\lim_{r,\rho\to 0^+}\mint_{C^-_{r,\rho}(x_0)} |\overline \sigma \cdot \tilde n - \T(\overline \sigma)(x_0)\cdot n(x_0)|=0\ .
$$
Since  $\nabla \u$ is bounded, we deduce
$$
\lim_{r,\rho\to 0^+}\mint_{C^-_{r,\rho}(x_0)} ( \overline \sigma \cdot \tilde n) \nabla \ov u
 = 0 \qquad \hbox{ for } \mathcal H ^ {n-1}\hbox{-a.e.\ } x_0 \in \partial \Omega\, ,
 $$
which by the pointwise characterization (\ref{pointwisenormalt}) of the normal trace is equivalent to (\ref{differenzaN}).


\qed

\section{Appendix}\label{secapp}
\begin{proposition}\label{prop_duality}
Let $Y, Z$ be Banach spaces. Let $A:Y \to Z$ be a linear operator
with dense domain $D(A)$. Let $\Phi: Y \to \re \cup \{+ \infty\}$ be
convex, and $\Psi: Z \to \re \cup \{ + \infty\}$ be convex lower
semicontinuous. Assume there exists $u _0 \in D (A)$ such that $\Phi(u_0) < + \infty$ and $\Psi$ is continuous at $A\,u_0$.  Let $Z^*$
denote the dual space of $Z$, $A^*$ the adjoint operator of $A$, and
$\Phi ^*$, $\Psi ^*$ the Fenchel conjugates of $\Phi$, $\Psi$. Then
\begin{equation}\label{dual}
- \inf _{u \in Y} \Big \{ \Psi (A\,u) + \Phi (u)  \Big \} = \inf
_{\sigma \in Z ^*} \Big \{ \Psi ^* (\sigma) + \Phi ^* (- A ^*\,
\sigma) \Big \}\ ,
\end{equation}
and the infimum at the right hand side is achieved.

Furthermore,  $\ov{u}$ and $\ov{\sigma}$ are optimal for the l.h.s.\ and the r.h.s.\ of $(\ref{dual})$ respectively, if and
only if  there holds $\ov{\sigma}\in \partial
\Psi(A\ov{u})$ and $-A^\ast \ov{\sigma} \in \partial \Phi (\ov{u})$.
\end{proposition}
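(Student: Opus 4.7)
My approach would be the classical one based on the \emph{perturbation function} associated with the primal problem. Introduce $h: Z \to \mathbb R \cup \{+\infty\}$ defined by
\begin{equation*}
h(z) := \inf_{u \in Y} \bigl\{ \Phi(u) + \Psi(A u + z) \bigr\},
\end{equation*}
so that the primal value at the l.h.s.\ of \eqref{dual} is precisely $-h(0)$. Since $\Phi$ and $\Psi$ are convex and $A$ is linear, $h$ is convex (this is a routine check on convex combinations of near-optimal $u$'s for two values of $z$). The whole proposition will then be recast as a statement about $h$: namely, that $h(0) = h^{**}(0)$, that the infimum in the definition of $h^{**}(0)$ is attained, and that attainment encodes the optimality conditions via the Fenchel equality.

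The next step is to identify $h^*$. Starting from the definition $h^*(\sigma) = \sup_{z} \{\langle \sigma, z\rangle - h(z)\}$ and swapping the suprema over $z$ and $u$, one writes $\langle \sigma, z\rangle = \langle \sigma, Au + z\rangle - \langle A^*\sigma, u\rangle$ (here the dense definition of $A$ is used to give meaning to $A^*$), and then substitutes $w = Au + z$ as the new variable. A direct computation gives
\begin{equation*}
h^*(\sigma) \;=\; \Psi^*(\sigma) + \Phi^*(-A^*\sigma),
\end{equation*}
so that the r.h.s.\ of \eqref{dual} is exactly $\inf_{\sigma \in Z^*} h^*(\sigma) = -h^{**}(0)$. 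Thus the equality in \eqref{dual} is equivalent to $h(0) = h^{**}(0)$, and attainment of the r.h.s.\ infimum is equivalent to $\partial h(0) \neq \emptyset$.

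The heart of the matter — and in my view the main obstacle — is using the qualification hypothesis (existence of $u_0 \in D(A)$ with $\Phi(u_0) < +\infty$ and $\Psi$ continuous at $A u_0$) to prove that $h$ is continuous at $0$. Once this is done, Moreau--Rockafellar guarantees both $h(0) = h^{**}(0)$ and $\partial h(0) \neq \emptyset$. The key observation is that, for $z$ in a neighborhood of $0$, choosing the admissible competitor $u = u_0$ gives $h(z) \le \Phi(u_0) + \Psi(A u_0 + z)$, and the r.h.s.\ stays bounded above in a neighborhood of $0$ by continuity of $\Psi$ at $A u_0$; combined with convexity of $h$ and the fact that $h > -\infty$ (an easy consequence of $h^* \not\equiv +\infty$, which in turn follows from the qualification condition by producing a continuous affine minorant of $\Psi$ near $A u_0$), this forces $h$ to be locally bounded above, hence continuous, at $0$.

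Finally, for the characterization of optimal pairs: assuming $\bar u$ is primal-optimal and $\bar\sigma$ is dual-optimal, the equality \eqref{dual} reads
\begin{equation*}
\Phi(\bar u) + \Psi(A\bar u) + \Phi^*(-A^*\bar\sigma) + \Psi^*(\bar\sigma) \;=\; 0.
\end{equation*}
Grouping terms as $[\Phi(\bar u) + \Phi^*(-A^*\bar\sigma) - \langle -A^*\bar\sigma, \bar u\rangle] + [\Psi(A\bar u) + \Psi^*(\bar\sigma) - \langle \bar\sigma, A\bar u\rangle] = 0$ and noting that each bracket is non-negative by Fenchel's inequality, both must vanish. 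The Fenchel equality characterization of the subdifferential then yields $-A^*\bar\sigma \in \partial \Phi(\bar u)$ and $\bar\sigma \in \partial \Psi(A\bar u)$. The converse direction is immediate: summing the two Fenchel equalities and rearranging gives $\Phi(\bar u) + \Psi(A\bar u) = -[\Phi^*(-A^*\bar\sigma) + \Psi^*(\bar\sigma)]$, so both are optimal by \eqref{dual}.
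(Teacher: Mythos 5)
The paper offers no proof of this proposition: it is disposed of with the single line ``See \cite{Bo}, Proposition 14.'' Your perturbation--function argument is precisely the standard proof of that cited result (the Fenchel--Rockafellar scheme as in Ekeland--Temam), and its architecture is correct: the convexity of $h$, the identification $h^*(\sigma)=\Psi^*(\sigma)+\Phi^*(-A^*\sigma)$, the use of the qualification hypothesis to get $h$ bounded above near $0$, the Moreau--Rockafellar step yielding $h(0)=h^{**}(0)$ together with $\partial h(0)\neq\emptyset$ (the latter being exactly attainment of the dual infimum), and the extremality characterization obtained by splitting the vanishing sum into two non-negative Fenchel gaps, using $\langle -A^*\ov\sigma,\ov u\rangle+\langle\ov\sigma,A\ov u\rangle=0$.

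Two points deserve tightening. First, your justification of the properness of $h$ does not work as written: a continuous affine minorant of $\Psi$ only gives $\Psi^*\not\equiv+\infty$, whereas finiteness of $h^*(\sigma)$ also requires $\Phi^*(-A^*\sigma)<+\infty$, which does not follow from the qualification condition (take $\Phi$ affine and unbounded below along a direction not controlled by $A^*\sigma$). The clean repair is to dispose of the degenerate case directly: if $h(0)=-\infty$, weak duality forces both sides of (\ref{dual}) to equal $+\infty$ and there is nothing left to prove; if $h(0)\in\re$, then a convex function finite at $0$ and bounded above on a neighborhood of $0$ is automatically proper and continuous at $0$, and your argument proceeds. (In the paper's applications $h(0)$ is always finite by coercivity.) Second, the swap $\langle\sigma,Au\rangle=\langle A^*\sigma,u\rangle$ in the computation of $h^*$ is licit only for $\sigma\in D(A^*)$; for a genuinely unbounded densely defined $A$ one must adopt the convention $\Phi^*(-A^*\sigma)=+\infty$ off $D(A^*)$ and verify that restricting the dual infimum to $D(A^*)$ does not change its value. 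This caveat is vacuous in the present paper, where $Au=(\nabla u,u)$ is bounded on all of $W(\Omega)$, but it should be acknowledged if the proposition is to be proved at the stated level of generality.
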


\proof See \cite[Proposition 14]{Bo}. \qed

\bigskip

\begin{proposition}\label{prop_subdif}
Let $Y$ and $Z$ be two Banach spaces and let $h:Y\times Z \to \mathbb R\cup\{+\infty\}$ be a proper function of the form
$$
h(y,z)=h_1(y)+h_2(z) \qquad \forall \, ( y , z) \in Y \times Z   \, .
$$
Then the Fenchel conjugate and the subdifferential of $h$ are given respectively by
$$
\begin{array}{ll}
& h^*(y^*,z^*)=h_1^*(y^*) + h_2^*(z^*)  \qquad \forall \, ( y ^*, z ^*) \in Y ^* \times Z ^* \, ,
 \\
&\partial h(y,z)=\partial h_1(y)\times \partial h_2(z) \qquad \forall \, ( y , z) \in Y \times Z   \, .
\end{array}
$$
\end{proposition}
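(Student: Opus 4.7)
The plan is to unwind both assertions directly from definitions, exploiting the key structural feature that $h$ is a sum of functions depending on disjoint sets of variables, so that suprema and inequalities over $Y\times Z$ decouple.

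For the conjugate formula, I would start from
$$h^*(y^*,z^*)=\sup_{(y,z)\in Y\times Z}\Bigl\{\langle y^*,y\rangle_{Y^*,Y}+\langle z^*,z\rangle_{Z^*,Z}-h_1(y)-h_2(z)\Bigr\}.$$
The integrand splits as $[\langle y^*,y\rangle-h_1(y)]+[\langle z^*,z\rangle-h_2(z)]$, with the first bracket depending only on $y$ and the second only on $z$. Since the supremum of a sum over a product set, where each summand depends on only one factor, equals the sum of the individual suprema, the right-hand side becomes $h_1^*(y^*)+h_2^*(z^*)$. No properness or lower semicontinuity is needed for this computation; the identity holds in $\re\cup\{+\infty\}$.

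For the subdifferential formula I would prove the two inclusions separately. For $\supseteq$: if $y^*\in\partial h_1(y)$ and $z^*\in\partial h_2(z)$, then for every $(y',z')\in Y\times Z$ the defining subgradient inequalities for $h_1$ at $y$ and for $h_2$ at $z$ can be summed, yielding
$$h(y',z')-h(y,z)\ge \langle y^*,y'-y\rangle+\langle z^*,z'-z\rangle,$$
which means exactly $(y^*,z^*)\in\partial h(y,z)$. For $\subseteq$: given $(y^*,z^*)\in\partial h(y,z)$, specialize the defining inequality by taking $z'=z$ to get the subgradient inequality for $h_1$ at $y$ with slope $y^*$, and similarly $y'=y$ to recover the one for $h_2$ at $z$; hence $y^*\in\partial h_1(y)$ and $z^*\in\partial h_2(z)$.

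Both parts are essentially bookkeeping; there is no substantial obstacle. The only point that needs a word of care is that, since $h$ is only assumed proper (not everywhere finite), one must check that both subgradient inequalities remain meaningful at points where $h_1(y)$ or $h_2(z)$ equals $+\infty$, but at such points $(y,z)\notin\mathrm{dom}\,h$, so $\partial h(y,z)=\emptyset$ and the equality $\partial h_1(y)\times\partial h_2(z)=\emptyset$ holds trivially since one of the two factors is empty.
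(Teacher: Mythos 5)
Your proof is correct and follows exactly the route the paper indicates (direct verification from the definitions of Fenchel conjugate and subdifferential, using the separated structure of $h$); the paper simply omits the details you have written out. The edge-case discussion at points outside $\mathrm{dom}\,h$ is a welcome extra precision.
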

\proof The statement can be easily checked by using directly the definitions of Fenchel conjugate and subdifferential, and exploiting the special structure of the function $h$. \qed

\bigskip

\begin{proposition}\label{Ih*2}
Let $h:\Omega\times \mathbb R^d\to \mathbb R$ be such that for every $x\in \Omega$ the function $h(x,\cdot)$ is lower semicontinuous and convex, and assume that there exist $\overline{v},\overline{v}^*\in L^\infty(\Omega;\re^d)$ such that
$$
\int_\Omega |h(x,\overline{v}(x))|<+\infty\ ,\ \ \int_\Omega |h^*(x,\overline{v}^*(x))|<+\infty\ ,
$$
where $h^*$ denotes the Fenchel conjugate of $h$ performed with respect to the second variable.

Let $1\leq \alpha\leq +\infty$ and consider the integral functional  $I _h$ defined on $L^\alpha(\Omega;\mathbb R^d)$ by $I _h (v):= \int _\Omega h (x,v(x))$. Then the Fenchel conjugate and the subdifferential of $I _h$ are given respectively by
$$
(I_h)^*(v^*)=\int_\Omega h^*(x,v^*(x)) \qquad \forall\, v^*\in L^{\alpha'}(\Omega;\mathbb R^d)\, ,
$$
and
$$
\partial I_h(v)=\left\{v^*\in L^{\alpha'}(\Omega;\mathbb R^d)\ :\ v^*(x)\in \partial h(x,v(x))\ a.e. \ in\ \Omega  \right\}
\qquad \forall \, v\in L^\alpha(\Omega;\mathbb R^d)
\, .
$$
\end{proposition}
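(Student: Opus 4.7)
The plan is to first establish the formula for the Fenchel conjugate $(I_h)^*$, and then to deduce the description of $\partial I_h$ as an immediate corollary via the Fenchel equality characterization of the subdifferential recalled in \eqref{subgrad}. Indeed, once one knows $(I_h)^*(v^*) = \int_\Omega h^*(x,v^*(x))\,dx$, the condition $v^* \in \partial I_h(v)$ becomes $I_h(v) + (I_h)^*(v^*) = \int_\Omega v\cdot v^*\,dx$, which rewrites as
\[
\int_\Omega \bigl[\, h(x,v(x)) + h^*(x,v^*(x)) - v(x)\cdot v^*(x)\,\bigr]\,dx = 0.
\]
Since the integrand is pointwise non-negative by the pointwise Fenchel inequality in $\mathbb{R}^d$, it must vanish a.e., and this vanishing is precisely the pointwise Fenchel equality $v^*(x)\in\partial h(x,v(x))$ (by \eqref{subgrad} applied fiberwise). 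The converse implication is obtained by integrating the pointwise Fenchel equality. This reduces everything to the conjugate formula.

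\textbf{Proof of the conjugate formula.} One direction is essentially trivial: for every $v\in L^\alpha(\Omega;\mathbb{R}^d)$ and $v^*\in L^{\alpha'}(\Omega;\mathbb{R}^d)$, integrating the pointwise Fenchel inequality $z\cdot z^* \le h(x,z) + h^*(x,z^*)$ on $z = v(x), z^* = v^*(x)$ gives $\int v\cdot v^* - I_h(v) \le \int h^*(x,v^*(x))\,dx$, and taking the supremum over $v$ yields $(I_h)^*(v^*) \le \int h^*(x,v^*(x))\,dx$. For the reverse inequality, I would fix $v^*\in L^{\alpha'}$ and, for each $n\in\mathbb{N}$, use a measurable selection argument: since $h$ is a normal convex integrand, so is $h^*$, and the multifunction
\[
x\ \longmapsto\ \bigl\{z\in\mathbb{R}^d\ :\ z\cdot v^*(x) - h(x,z)\ \ge\ \min\{h^*(x,v^*(x)),n\} - \tfrac{1}{n}\bigr\}
\]
has non-empty closed values with measurable graph, hence admits a measurable selection $w_n$. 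To produce an admissible $L^\alpha$ competitor, I truncate by setting $v_n := w_n\,\mathbf{1}_{A_n} + \bar v\,\mathbf{1}_{\Omega\setminus A_n}$ with $A_n:=\{|w_n|\le k_n\}$, where $\bar v$ is the reference function given by assumption and $k_n\uparrow\infty$ is chosen quickly enough. The pointwise Fenchel inequality applied with $\bar v^*$ furnishes the affine lower bound $h(x,z)\ge z\cdot\bar v^*(x) - h^*(x,\bar v^*(x))$, whose $L^1$ right-hand side absorbs the truncation error; combining this with Fatou's lemma as $n\to\infty$ yields $(I_h)^*(v^*)\ge \int h^*(x,v^*(x))\,dx$, including the case where the right-hand side is $+\infty$ (encoded in the term $\min\{h^*(x,v^*(x)),n\}$).

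\textbf{Main obstacle.} The serious step is the measurable selection combined with the truncation needed to lift pointwise near-optimizers to admissible elements of $L^\alpha$. The role of the standing hypothesis on the existence of $\bar v,\bar v^*\in L^\infty$ with integrable $h(\cdot,\bar v)$ and $h^*(\cdot,\bar v^*)$ is precisely to guarantee, on one hand, that $I_h$ and the candidate functional $v^*\mapsto\int h^*(x,v^*(x))\,dx$ are both proper, and on the other hand to supply the $L^1$ affine minorant of $h(x,\cdot)$ that controls the truncation. No topological or regularity property of $h$ beyond its status as a normal convex integrand is needed; this is the classical Rockafellar interchange identity for normal integrands, and once it is in place the rest of the proposition is purely formal via Fenchel duality.
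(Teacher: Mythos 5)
Your proposal is correct, and it is essentially a reconstruction of the classical argument: the paper gives no proof of this proposition at all, but simply cites \cite[Theorem 2 and Corollary 3 of Section 3 in Chapter II]{Ek}, and the proof contained there is exactly the Rockafellar-type interchange of supremum and integral via measurable selection of pointwise near-maximizers that you sketch, followed by the purely formal passage from the conjugate formula to the subdifferential through the Fenchel equality \eqref{subgrad}. Two small points deserve attention if you write this out in full. First, the hypothesis as stated (``for every $x$ the function $h(x,\cdot)$ is l.s.c.\ and convex'') does not by itself give the measurability of the graph of your multifunction, nor even the measurability of $x\mapsto h(x,v(x))$; you correctly upgrade this to ``normal convex integrand,'' which is the intended reading (and is automatic in the paper's applications, where the integrands are Carath\'eodory), but it should be stated explicitly. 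Second, in the subdifferential step the rewriting of $I_h(v)+(I_h)^*(v^*)=\langle v^*,v\rangle$ as the vanishing of a nonnegative integral implicitly uses that both $I_h(v)$ and $(I_h)^*(v^*)$ are finite; this is automatic when $v^*\in\partial I_h(v)$, and in the converse direction the pointwise identity $h(x,v)+h^*(x,v^*)=v\cdot v^*$ together with the $L^1$ affine minorants furnished by $\overline v$ and $\overline v^*$ shows that each term is separately integrable, so the integration is legitimate. With these caveats made explicit, your argument is complete and self-contained, which is more than the paper offers.
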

%
%
\begin{proof}
See \cite[Theorem 2 and Corollary 3 of Section 3 in Chapter II]{Ek}.
\end{proof}

\bigskip

\begin{proposition}\label{clarke}
Let $Y$ be a normed space, let $h:Y\to \re$ be a convex function, and let $\overline{y}\in Y$ be a continuity point of $h$. Then $\partial h(\overline{y})$ is a nonempty and weakly * compact subset of $Y ^*$.
\end{proposition}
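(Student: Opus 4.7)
The plan is to establish the three properties in turn: non-emptiness, norm-boundedness, and weak* closedness, and then combine the last two via the Banach--Alaoglu theorem to deduce weak* compactness.

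First, I would exploit the fact that continuity of a convex function at a point upgrades automatically to local Lipschitz continuity on a neighborhood. More precisely, since $h$ is convex and continuous at $\overline{y}$, one may find $r>0$ and $M>0$ such that $|h(y)|\le M$ on $\overline{y}+rB_Y$; a standard convexity argument (comparing $h$ on a small ball with its values on a slightly larger ball) then yields a Lipschitz constant $L$ for $h$ on, say, $\overline{y}+\tfrac{r}{2}B_Y$. This local Lipschitz property is the workhorse of the proof.

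Next I would handle non-emptiness via Hahn--Banach. The epigraph $\mathrm{epi}(h)=\{(y,t)\in Y\times\re : t\ge h(y)\}$ is convex; since $h$ is continuous at $\overline{y}$, it has nonempty interior (any point of the form $(y,h(\overline y)+1)$ with $y$ near $\overline y$ lies in the interior). The point $(\overline{y},h(\overline{y}))$ lies on its boundary, so by the geometric Hahn--Banach theorem there exists a nonzero continuous linear functional $(y^*,\lambda)\in Y^*\times\re$ supporting $\mathrm{epi}(h)$ at $(\overline{y},h(\overline{y}))$. Because $\mathrm{epi}(h)$ contains upward vertical rays, one gets $\lambda\le 0$, and $\lambda\ne 0$ (else $y^*$ would be a nonzero functional bounded above on a neighborhood of $\overline{y}$, which is impossible). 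Normalizing so that $\lambda=-1$ yields exactly $h(y)\ge h(\overline{y})+\langle y^*,y-\overline{y}\rangle$ for all $y\in Y$, i.e.\ $y^*\in\partial h(\overline{y})$.

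Then, for boundedness, I would plug the subgradient inequality into the local Lipschitz bound: for any $y^*\in\partial h(\overline y)$ and any $z\in Y$ with $\|z\|\le 1$ and $\e>0$ small enough,
\begin{equation*}
\e\,\langle y^*,z\rangle \le h(\overline{y}+\e z)-h(\overline{y})\le L\,\e\,\|z\|,
\end{equation*}
so $\|y^*\|_{Y^*}\le L$. Hence $\partial h(\overline{y})$ sits inside the closed ball of radius $L$ in $Y^*$. Weak* closedness is immediate: if $y^*_\alpha\in\partial h(\overline y)$ and $y^*_\alpha\rightharpoonup^* y^*$, passing to the limit in $h(y)\ge h(\overline y)+\langle y^*_\alpha,y-\overline y\rangle$ gives $y^*\in\partial h(\overline y)$. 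Combining weak* closedness with norm-boundedness and invoking Banach--Alaoglu concludes that $\partial h(\overline y)$ is weak* compact.

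The only step that requires care is the non-emptiness argument: one must justify that the supporting functional produced by Hahn--Banach is \emph{non-vertical} (i.e.\ $\lambda\ne0$), which is precisely where the continuity of $h$ at $\overline{y}$ enters in an essential way. Once that is handled, the remaining assertions are routine.
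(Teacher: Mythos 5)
Your proof is correct. Note, however, that the paper does not actually prove this proposition: it simply cites \cite[Proposition 2.1.2]{Cl2} (Clarke's book), so there is no internal argument to compare against; your write-up is a self-contained proof of that classical fact, and it follows the standard route. All the key points are in place: continuity of a convex function at $\overline y$ upgrades to local boundedness and then to a local Lipschitz estimate; the epigraph has nonempty interior, so geometric Hahn--Banach produces a supporting functional at $(\overline y,h(\overline y))$, and you correctly isolate the one delicate point, namely that the functional is non-vertical ($\lambda\neq 0$) because a nonzero linear functional cannot be bounded above on an open set --- this uses that $h$ is finite everywhere so the relevant neighborhood of $\overline y$ sits under the epigraph. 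The bound $\|y^*\|_{Y^*}\le L$ from the subgradient inequality and the local Lipschitz constant is right, and weak* closedness plus Banach--Alaoglu finishes the compactness. Two cosmetic remarks: in the closedness step you should make explicit that $y^*_\alpha$ is a net (weak* topology on a ball of $Y^*$ need not be metrizable for general $Y$), which your notation already suggests; and the pointwise passage to the limit in $\langle y^*_\alpha, y-\overline y\rangle$ is exactly what weak* convergence provides, so no further justification is needed. The argument would be accepted as a complete replacement for the paper's citation.
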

\begin{proof} See \cite[Proposition 2.1.2]{Cl2}. \end{proof}

\bigskip

\begin{proposition}\label{infsup}
Let $\mathcal A$ and $\mathcal B$ be nonempty convex subsets of two locally convex topological vector spaces, and let $\mathcal B$ be compact. Assume that $L:\mathcal A\times \mathcal B\to \mathbb R$ is such that for every $b\in \mathcal B$, $L(\cdot, b)$ is convex, and for every $a \in \mathcal A$, $L(a, \cdot)$ is upper semicontinuous and concave. Then, if the quantity
$$
\gamma:=\inf_{a\in \mathcal A}\,\sup_{b\in \mathcal B}\, L(a,b)
$$
is finite, we have $\gamma= \sup_{b\in \mathcal B}\,\inf_{a\in \mathcal A}\,L(a,b)$, and there exists $b^\star \in \mathcal B$ such that $\inf_{a\in \mathcal A}\,L(a,b^\star)=\gamma$.
If in addition $\mathcal A$ is compact and, for every $b\in \mathcal B$, $L(\cdot, b)$ is lower semicontinuous, there exists $a^\star \in \mathcal A$ such that $L(a^\star,b^\star)=\gamma$.
\end{proposition}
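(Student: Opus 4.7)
The plan is to derive the minimax equality from Sion's classical minimax theorem and then obtain the existence of optimizers by standard semicontinuity-plus-compactness arguments. All the ingredients for Sion's theorem are present: $\mathcal B$ is compact and convex, $\mathcal A$ is convex, the partial function $L(a,\cdot)$ is upper semicontinuous and concave on $\mathcal B$ (hence in particular quasi-concave), and the partial function $L(\cdot,b)$ is convex on $\mathcal A$ (hence quasi-convex). Note that we do \emph{not} need lower semicontinuity of $L(\cdot,b)$ in this first part, since the asymmetric formulation of Sion's theorem requires the appropriate semicontinuity only on the side that is assumed compact.

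The first step is to record the trivial weak inequality $\sup_{b\in \mathcal B}\inf_{a\in \mathcal A}L(a,b)\leq \inf_{a\in \mathcal A}\sup_{b\in \mathcal B}L(a,b)=\gamma$. The reverse inequality is then precisely the content of Sion's theorem applied with the roles above; since by assumption $\gamma$ is finite, the equality
$$\gamma=\sup_{b\in \mathcal B}\inf_{a\in \mathcal A}L(a,b)$$
follows at once.

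For the existence of $b^\star$, the idea is to introduce the marginal function $\varphi:\mathcal B\to \mathbb R\cup\{-\infty\}$ defined by $\varphi(b):=\inf_{a\in \mathcal A}L(a,b)$. Because each $L(a,\cdot)$ is upper semicontinuous and the infimum of a family of upper semicontinuous functions is itself upper semicontinuous, $\varphi$ is usc on the compact set $\mathcal B$. Hence $\varphi$ attains its supremum at some $b^\star\in\mathcal B$, and by the minimax equality just proved, $\varphi(b^\star)=\sup_{b\in\mathcal B}\varphi(b)=\gamma$, which is the desired statement.

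Finally, under the additional assumption that $\mathcal A$ is compact and each $L(\cdot,b)$ is lower semicontinuous, the scalar function $a\mapsto L(a,b^\star)$ is lsc on the compact set $\mathcal A$, hence it attains its infimum at some $a^\star\in\mathcal A$. By construction $L(a^\star,b^\star)=\inf_{a\in \mathcal A}L(a,b^\star)=\varphi(b^\star)=\gamma$, concluding the proof. The only genuine obstacle is to invoke the correct asymmetric formulation of Sion's theorem (compactness and upper semicontinuity on the $\mathcal B$ side, only convexity on the $\mathcal A$ side); once this is set up, the rest is routine semicontinuity and compactness, and we can simply refer the reader to a standard source for Sion's theorem.
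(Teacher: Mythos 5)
Your argument is sound and, unlike the paper, actually supplies a proof: the authors dispose of this proposition by citing Clarke and Ekeland, so there is no internal argument to compare against. Your reduction is the natural one --- a minimax theorem for the equality, then the marginal function $\varphi(b)=\inf_{a}L(a,b)$ (upper semicontinuous as an infimum of upper semicontinuous functions, hence attaining its supremum $\gamma$ on the compact set $\mathcal B$) for the existence of $b^\star$, and finally lower semicontinuity plus compactness on $\mathcal A$ for $a^\star$; all of these steps are correct, including the observation that $\varphi$ may take the value $-\infty$ but still attains its finite supremum. The one point you should tighten is the appeal to ``Sion's theorem'': in its standard formulation Sion's theorem assumes $L(\cdot,b)$ lower semicontinuous \emph{and} $L(a,\cdot)$ upper semicontinuous, i.e.\ semicontinuity in both variables, even when only one of the two sets is compact, whereas the first part of the proposition makes no semicontinuity assumption on the $\mathcal A$ side. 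What you actually need is the convex--concave (not merely quasi-convex--quasi-concave) minimax theorem of Ky Fan--Moreau type, in which compactness and upper semicontinuity on the $\mathcal B$ side together with plain convexity of $L(\cdot,b)$ suffice; this is exactly the version found in Ekeland--Temam or in the sources the paper cites. Since $L$ here is genuinely convex in $a$ and concave in $b$, that theorem applies and the rest of your proof goes through unchanged; just cite the right statement.
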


\begin{proof}
See \cite[p.~263]{Cl} and \cite{Ek2}.
\end{proof}

\bigskip

\end{document}